\newcommand{\br}[3]{{$#1$}$\lower4pt\hbox{$\tp\atop\raise4pt \hbox{$\scriptscriptstyle{#2}$}$} ${$#3$}}
\newcommand{\tw}[3]{{$#1$}${\,\scriptscriptstyle {#2}}\atop\raise9pt\hbox{$\scriptstyle\tp$} ${$#3$}}
\newcommand{\ttps}[2]{{#1}\raise5pt\hbox{$\lower12pt\hbox{$\scriptstyle\tp$}\atop \lower0pt\hbox{$\tilde\;$}$}\raise4.5pt\hbox{${\scriptstyle{#2}}$}}
\newcommand{\st}[1]{\mbox{${\,\scriptscriptstyle {#1}}\atop\raise5.5pt\hbox{$*$}$}}
\newcommand{\rd}[1]{\mbox{${\,\scriptscriptstyle {#1}}\atop\raise5.5pt\hbox{$\bullet$}$}}
\newcommand{\rt}[1]{\otimes_\chi}
\newcommand{\lt}[1]{\mbox{${\,\scriptscriptstyle {#1}}\atop\raise5.5pt\hbox{$\ltimes$}$}}
\newcommand{\btr}{\raise1.2pt\hbox{$\scriptstyle\blacktriangleright$}\hspace{2pt}}
\newcommand{\btl}{\raise1.2pt\hbox{$\scriptstyle\blacktriangleleft$}\hspace{2pt}}
\newcommand{\lcr}{\raise1.0pt \hbox{${\scriptstyle\rightharpoonup}$}}
\newcommand{\rcr}{\raise1.0pt \hbox{${\scriptstyle\leftharpoonup}$}}
\newcommand{\ttp}{{\lower12pt\hbox{$\tp$}\atop \hbox{$\tilde\;$}}}
\newcommand{\id}{\mathrm{id}}
\newcommand{\Fc}{\mathcal{F}}
\newcommand{\Hc}{\mathcal{H}}
\newcommand{\Jc}{\mathcal{J}}
\renewcommand{\S}{\mathcal{S}}
\newcommand{\Ru}{\mathcal{R}}
\newcommand{\C}{\mathbb{C}}
\newcommand{\Z}{\mathbb{Z}}
\newcommand{\N}{\mathbb{N}}
\newcommand{\tp}{\otimes}
\newcommand{\U}{U}
\newcommand{\ve}{\varepsilon}
\newcommand{\gm}{\gamma}
\newcommand{\dt}{\delta}
\newcommand{\ot}{\otimes}
\newcommand{\op}{\oplus}
\newcommand{\la}{\lambda}
\newcommand{\End}{\mathrm{End}}
\newcommand{\rk}{\mathrm{rk}}
\newcommand{\Tr}{\mathrm{Tr}}
\newcommand{\Rm}{\mathrm{R}}
\newcommand{\La}{\Lambda}
\newcommand{\Gm}{\Gamma}
\newcommand{\g}{\mathfrak{g}}
\renewcommand{\b}{\mathfrak{b}}
\newcommand{\h}{\mathfrak{h}}
\newcommand{\s}{\mathfrak{s}}
\newcommand{\eps}{\epsilon}
\newcommand{\nn}{\nonumber}
\renewcommand{\l}{\mathfrak{l}}
\newcommand{\si}{\sigma}
\newcommand{\al}{\alpha}
\newcommand{\bt}{\beta}
\newcommand{\prt}{\partial}
\newcommand{\be}{\begin{eqnarray}}
\newcommand{\ee}{\end{eqnarray}}
\newtheorem{thm}{Theorem}[section]
\newtheorem{propn}[thm]{Proposition}
\newtheorem{lemma}[thm]{Lemma}
\newtheorem{corollary}[thm]{Corollary}
\newtheorem{remark}[thm]{Remark}
\newcommand{\parag}{\advance\prg by1 {\noindent\bf\thesection.\the\prg\hspace{6pt}}}
\begin{document}
\title{ R-matrix and inverse Shapovalov form}
\author{
Andrey Mudrov\vspace{20pt}\\
\small Department of Mathematics,\\ \small University of Leicester, \\
\small University Road,
LE1 7RH Leicester, UK\\
}

\date{}
\maketitle

\begin{abstract}
We  construct the inverse Shapovalov form of a simple complex quantum group from its universal R-matrix
based on a generalized Nagel-Moshinsky approach to lowering operators.
We establish a connection between this algorithm and the ABRR equation for  dynamical twist.
\end{abstract}

{\small \underline{Mathematics Subject Classifications}:   17B37, 22E47,  81R50.
}

{\small \underline{Key words}: Quantum groups, Shapovalov form, Verma modules, R-matrix.
}
\section{Introduction}
There are two important bilinear forms related to the triangular decomposition $U_q(\g)=U_q(\g_-)\tp U_q(\h)\tp U_q(\g_+)$ of the Drinfeld-Jimbo quantum group $U_q(\g)$.
One of them has a classical analog that plays a role in the repsentation theory of simple
Lie algebras. This form was introduced by Shapovalov and studied by Jantzen for, respectively, ordinary and parabolic Verma modules, \cite{Shap,Jan}. It is responsible for irreducibility  of highest weight modules while its inverse, for generic weights,
yields singular vectors in tensor product  modules and invariant star product on coadjoint
orbits, \cite{AL}. This is also applicable to quantized universal enveloping algebras, \cite{D}, whose representation theory is in parallel to the classical setting, and
the quantum Shapovalov form can be as well defined on quantum Verma modules and their
quotients, \cite{Jan,DCK}.

At the same time, there is another impotant object that is special to quantum groups and that can be interpreted as a bilinear pairing between the Borel subalgebras $U_q(\b_\pm)=U_q(\h)U_q(\g_\pm)$. They are in duality as Hopf algebras, and the inverse form is given by the universal R-matrix $\Ru\in U_q(\b_+)\tp U_q(\b_-)$, \cite{D}.
We establish a relation between the two forms  constructing the Shapovalov inverse from $\Ru$.
As the latter is explicitly expressed through generators of the quantum group, \cite{LS, KT}, we result in an explicit
formula for the inverse Shapovalov form.

Our approach originates from the  Nagel-Moshinsky
construction of the lowering/raising operators of the classical non-exceptional Lie algebras, \cite{NM,Mol}. In combination with the R-matrix, it was applied to the natural representations  of
orthosymplectic quantum groups
in \cite{AM}, to construct generators of Mickelsson algebras. In this paper we extend those ideas to all  Drinfeld-Jimbo quantum groups and all weight modules, $V$.
As a result, we obtain explicit expressions for singular vectors in tensor products $V\tp M_\la$, where
 $M_\la$ is a Verma module of generic highest weight $\la$. For $V$ a fundamental representation, that yields lowering operators
of Mickelsson algebras. For $V$ the negative Verma module $M_\la^*$ of lowest weight $-\la$, the singular vector of zero weight in $M_\la^*\tp M_\la$ is
the inverse of the invariant Shapovalov form $M_\la \tp M^*_\la\to \C$.

We also construct a lift of the Shapovalov inverse to $\hat  U_q(\b_+)\tp \hat U_q(\b_-)$ (the hat means extension over a ring
of fractions of $U_q(\h)$) in purely algebraic terms, as a "universal tensor".
The key element of the theory is tensor $\Fc=\frac{1}{q-q^{-1}}(q^{-h\dot \tp h}\Ru-1\tp 1)\in U_q(\g_-)\tp U_q(\g_+)$, where
$h\dot \tp h\in \h\tp \h$ is the inverse of the canonical inner form on $\h^*$. It  satisfies
 a sort of intertwining identity that is equivalent to $\Ru\Delta(x)=\Delta^{op}(x)\Ru$, $x\in U_q(\g)$.
In the classical limit, $\Fc$ tends to $\sum_{\al \in \Rm^+}e_\al\tp f_\al\in \g_+\tp \g_-$ (the inverse of the ad-invariant
form restricted to $\g_-\tp \g_+$) as $\hbar =\log q$ goes to $0$,
so the theory applies to the classical universal enveloping algebras either as a limit case or directly, despite of degeneration of the R-matrix.

Our main result can be described as follows.
Choosing an orthogonal basis $\{h_s\}\subset \h$
we set $h\cdot h=\sum_{s=1}^{\rk \g}h_s^2\in U_\hbar(\h)$.
Let  $h_\rho\in \h$ be the Weyl vector, satisfying $\al(h_\rho)=\frac{1}{2}(\al,\al)$
for all simple positive roots $\al$, and define $d=h_\rho + \frac{h\cdot h}{2}\in U_\hbar(\h)\tp U_\hbar(\h)$.
Passing to $\hbar$-adic completion $U_\hbar(\b_\pm)$,
introduce  a linear  operator $D$ on the zero weight subalgebra in $U_\hbar(\g_+)\g_+\tp U_\hbar(\b_-)\g_-$ via the commutator
$D(X)=[1\tp d,X]$.
The operator $\frac{q^{2D}-\id}{q-q^{-1}}$ preserves the zero weight subalgebra in
$U_q(\g_+)\g_+\tp \hat U_q(\b_-)\g_-$ and is invertible on it. Here the hat designates extension over a localized
Cartan subalgebra.
Put $\Fc^{(0)}= 1\tp 1$ and, by induction,  $\Fc^{(k+1)}=(q-q^{-1})(q^{-2D}-\id)^{-1}\bigl( \Fc\Fc^{(k)}\bigr)\in U_q(\g_+)\tp \hat U_q(\b_-)$ for $k\geqslant 0$. Then the series $\hat \Fc=\sum_{k=0}^\infty\Fc^{(k)}$ is
 a lift of the inverse of the Shapovalov form to $U_q(\g_+)\tp \hat U_q(\b_-)\subset \hat  U_q(\b_+)\tp \hat U_q(\b_-)$.
Remark that the Cartan subalgebra is sitting in the right tensor leg of $\hat \Fc$, which is special to the
 method.

It follows that  $\hat \Fc$ satisfies a linear equation, which is close to the ABRR equation
for  dynamical twist, \cite{ES,ABRR}. Thus we explain the ABRR equation from the generalized Nagel-Moshinsky algorithm.
In contrast with the solution given in  \cite{ABRR} in the form of infinite product, we present it
as a series truncate in every finite dimensional representation while
the product is still infinite.

The paper is organized as follows.
Section 2 collects the basics on  quantum groups and R-matrix.
There we define the matrix $\Fc$. In Section 3, we introduce partial ordering on weight
lattices of representations and define a certain map $\h^*\to \h+\C$ that plays a role in the sequel.
Section 4 is devoted to "localization"
of the action of positive Chevalley generators on a Verma module. With this data
we construct a matrix $\hat F$, which is the image
of $\hat \Fc$ in $\End(V)\tp \hat U_q(\b_-)$. This part of the paper
is a direct generalization of \cite{NM}.
In Section 5 we construct the universal matrix $\Fc$ which is specializing
to $\hat F$ in the representation $V$. We prove that  $\hat \Fc$
is the inverse Shapovalov form in Section 6. As an intermediate step, we construct
singular vectors in the tensor product of $V$ with a Verma module.
Finally, we demonstrate the relation of our construction with the ABRR approach.

\section{Preliminaries}
Let $\g$ be a simple complex Lie algebra with a fixed Cartan subalgebra $\h$ and triangular decomposition
$\g=\g_-\oplus \h\oplus \g_+$. Denote by $\Rm$ the root system of $\g$ with the subsystem of
positive roots $\Rm^+\subset \Rm$ and
the basis of simple roots  $\Pi^+\subset \Rm^+$.
Let $(\hspace{2pt}\cdot\>, \cdot\hspace{2.5pt})$ designate the canonical inner product on $\h^*$.
We normalize it so that $(\al,\al)\in \N$ for all $\al\in \Pi^+$.
Assuming $q\in\C$ is not a root of unity, we write $[z]_q=\frac{q^z-q^{-z}}{q-q^{-1}}$ whenever $q^{\pm z}$ make sense. The $q$-factorial is defined as $[n]_q!=[1]_q\cdot [2]_q\cdot\ldots \cdot[n]_q$ for $n\in \Z$.

Put $q_\al=q^{\frac{(\al,\al)}{2}}$. The quantum group $U_q(\g)$ is a $\C$-algebra generated by $e_{\pm\al}$ and $q^{\pm h_\al}$ subject to
\be
q^{h_\al}e_{\pm\bt}q^{-h_\al}=q^{\pm (\al,\bt)} e_{\pm\al},\quad [e_\al,e_{-\bt}]=\dt_{\al\bt}[h_\al]_q,
\label{Cart_rel}
\ee
$q^{h_\al}q^{-h_\al}=1=q^{-h_\al}q^{h_\al}$, plus the quantized Serre relations
$$
\sum_{k=0}^{1-a_{\al \bt}}(-1)^k
\frac{[n]_{q_{\al}}!}{[k]_{q_\al}![n-k]_{q_\al}!}
e_{\pm \al}^{1-a_{\al \bt}-k}
e_{\pm \bt}e_{\pm \al}^{k}
=0
, \quad \forall \al, \bt \in \Pi^+,
$$
where $a_{\al \bt}=\frac{2(\al,\bt)}{(\al,\al)}$ are the entries of the Cartan matrix.
The assignment $e_{\pm \al}\mapsto e_{\mp \al}$, $q^{\pm h_\al}\mapsto q^{\pm h_\al}$
extends to an antialgebra automorphism $\si$ called Chevalley involution.

 Note that the right identity (\ref{Cart_rel}) does not involve
$q_\al$ as in the usual definition of $U_q(\g)$, see e.q. \cite{CP}. It is a result of  rescaling of
the conventional negative Chevalley generators, $e_{-\bt}\mapsto [\frac{(\bt,\bt)}{2}]_q e_{-\bt}$, which does not affect
the homogeneous Serre relations.

We work with the comultiplication defined on the generators by
\be
&\Delta(e_{\al})=e_{\al}\tp q^{h_{\al}} + 1\tp e_{\al},
\quad
\Delta(e_{-\al})=e_{-\al}\tp 1 + q^{-h_{\al}} \tp e_{-\al},
\nn\\&
\Delta(q^{\pm h_{\al}})=q^{\pm h_{\al}}\tp q^{\pm h_{\al}},
\nn
\ee
for all $\al \in \Pi^+$.
The counit acts by $\eps(1)=1=\eps(q^{\pm h_\al})$ and $\eps(e_{\pm\al})=0$.
 The antipode is determined by $\gm(q^{\pm h_{\al}})=q^{\mp h_{\al}}$, $\gm(e_\al)=-e_\al q^{-h_{\al}}$,
$\gm(e_{-\al})=-q^{h_{\al}}e_{-\al}$.

For all $\bt \in \Rm$ there are elements $e_\bt \in U_q(\g)$ (higher root vectors), which generate a Poincare-Birghoff-Witt type basis in
$U_q(\g)$ over $U_q(\h)$, \cite{CP}.
We denote by $U_q(\g_\pm)$ the subalgebras in $U_q(\g)$ generated by, respectively, $\g_\pm=\{e_{\pm\al}\}_{\al\in \Rm^+}$.
Furthermore, the
$U_q(\b_\pm)\subset U_q(\g)$ denote the quantum Borel subgroups generated by $U_q(\g_\pm)$ and  $U_q(\h)$.
The algebras $U_q(\b_\pm)$ are equipped with $\Z$-grading assigning $\deg (e_{\pm\al})=1$ and $\deg(q^{\pm h_\al})=0$.
Then $U_q(\g_\pm)$ are graded subalgebras in $U_q(\b_\pm)$.

We fix the quasitriangular structure on $\U_q(\g)$ so that the R-matrix belongs
to an extended tensor product  $U_q(\b_+)\tp U_q(\b_-)$. It intertwines the coproduct
$\Delta(x)$ and its opposite $\Delta^{op}(x)$,
\be
\Ru \Delta(x)= \Delta^{op}(x)\Ru, \quad \forall x \in \U_q(\g).
\label{intertwiner}
\ee
In particular, $ \Ru (q^{h_\al}\tp q^{h_\al})= (q^{h_\al}\tp q^{h_\al})\Ru$ and
$
\Ru(e_{\al}\tp q^{h_{\al}} + 1\tp e_{\al})=(q^{h_{\al}}\tp e_{\al} + e_{\al}\tp 1)\Ru
$
for all $\al \in \Pi^+$.
The explicit expression for $\Ru$ can be found in \cite{CP}, Theorem 8.3.9.
 Let $h\dot{\tp} h=\sum_{s}h_s\tp h_s \in \h\tp\h$ denote the
inverse of canonical inner product, where the  summation is done over an orthogonal basis $\{h_s\}_{s=1}^{\rk\g} \subset \h$. Then
\be
\Ru= q^{ h\dot{\tp} h }\prod_{\bt\in \Rm^+} \exp_{q_\bt}\{(q-q^{-1})(e_\bt\tp e_{-\bt} )\},
\label{Rmat}
\ee
where  $\exp_{q}(x )=\sum_{k=0}^\infty q^{\frac{1}{2}k(k-1)}\frac{x^k}{[k]_q!}$. The product is ordered in accordance with
the normal ordering in $\Rm^+$ relative thePBW basis, see \cite{CP} for details.

It follows that
$q^{- h\dot{\tp} h }\Ru$ belongs to the tensor product $U_q(\g_+)\tp U_q(\g_-)$ completed
in the topology relative to the natural grading (inverse limit).
Introduce the tensor $\Fc=\frac{1}{q-q^{-1}}(q^{- h\dot{\tp} h }\Ru-1\tp 1)\in U_q(\g_+)\tp U_q(\g_-)$.
It plays the central role in our theory due to
\be
[1\tp e_{\al},\Fc] +(e_{\al}\tp q^{-h_{\al}})\Fc- \Fc (e_{\al}\tp q^{h_{\al}}) =
e_{\al}\tp [h_{\al}]_q, \quad \forall \al \in \Pi^+.
\label{key_id}
\ee
It is an immediate consequence of (\ref{intertwiner}).

Remark that $\Fc$ has the classical limit $\Fc_0=\sum_{\bt \in R^+}e_\bt\tp e_{-\bt}$ as $\hbar =\log q \to 0$. The intertwining relation (\ref{key_id})
has the obvious limit
\be
[1\tp e_{\al},\Fc_0]+[e_{\al}\tp 1,\Fc_0] = e_{\al}\tp h_{\al}, \quad \forall \al \in \Pi^+.
\nn
\ee
This makes our method directly applicable to the classical universal enveloping algebras with $\Fc_0$ in place of $\Fc$.

\section{Posets associated with representations}
In this section we introduce a partial ordering on a basis of a fixed $U_q(\g)$-module.
This data  will be used in  construction
of the inverse Shapovalov form in what follows.

Let $\Gm$ denote the root lattice, $\Gm=\Z\Pi^+$, and $\Gm^+$ the semigroup $\Z_+\Pi^+\subset \Gm$.
Consider a weight module $V$ over $U_q(\g)$ with the set of weights $\La(V)\subset \h^*$.
It particular, $V$ is $U_q(\h)$-diagonalizable and its weight spaces are finite dimensional.
Fix a weight basis $\{v_i\}_{i\in I}\subset V$ and let $\ve_i\in \La(V)$ be the weight of $v_i$, $i\in I$.

We introduce a partial ordering on $\La(V)$ by writing $\nu \geqslant  \mu$ if $\nu -\mu \in \Gm_+$.
 The ordering on $\La(V)$ induces an ordering on $I$, which we denote by the same symbol:
for $i,j\in I$ we write $i>j$ if and only if $\ve_i>\ve_j$.
It is convenient to use the language of Hasse diagram $\Hc$ associated with the poset $I$:
the arrows are marked with simple roots directed towards superior nodes.


The subset of all pairs $(i,j)$ such $\ve_i-\ve_j=\al \in \Gm^+$
will be denoted by $P(\al)$.
The pair $(i,j)$ is called simple if $\ve_i-\ve_j=\al \in \Pi^+$.
Let $\pi$ denote the representation homomorphism, $\pi\colon U_q(\g)\to \End(V)$. We can write
$
\pi(e_\al)=\sum_{(l,r)\in P(\al)}\pi^\al_{lr}e_{lr},
$
where $\pi^\al_{ij}$ are the entries of matrix $\pi(e_\al)$ in the basis $\{v_i\}_{i\in I}$ and $e_{ij}\in \End(V)$ are the standard matrix units, $e_{ij}v_k=\dt_{jk}v_i$.

For all $\al \in \h^*$ put
\be
\eta_{\al}=h_\al+(\al, \rho)-\frac{1}{2}||\al||^2 \in \h\op \C,
\ee
where  $\rho$ is the half-sum of all positive roots.
Put $\eta_{ij}=\eta_{\ve_i-\ve_j}$ for each pair $i,j\in I$ such that $i> j$.
\begin{lemma}
\label{thetas}
Suppose that $\al \in \Pi^+$ and $(l,r)\in P(\al)$. Then $\eta_{lr}=h_\al$. If $j\in I$ is such that $r> j$, then $\eta_{lj}-\eta_{rj}=h_\al+(\al,\ve_j-\ve_r)$.
\end{lemma}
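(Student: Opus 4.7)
The plan is to compute both identities directly from the definition $\eta_{\al}=h_\al+(\al,\rho)-\frac{1}{2}\|\al\|^2$, exploiting the fact that the Weyl vector satisfies $(\al,\rho)=\frac{1}{2}(\al,\al)$ exactly when $\al$ is simple.

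For the first claim, I would simply substitute $\al \in \Pi^+$: since $(\al,\rho)=\frac{1}{2}(\al,\al)=\frac{1}{2}\|\al\|^2$, the two non-Cartan terms of $\eta_\al$ cancel, leaving $\eta_{lr}=\eta_\al=h_\al$.

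For the second claim, the key observation is that $\ve_l-\ve_j=\al+\bt$, where $\bt:=\ve_r-\ve_j\in\Gm^+$. I would then expand $\eta_{lj}=\eta_{\al+\bt}$ using the bilinearity of $h_{(\cdot)}$ and $(\,\cdot\,,\rho)$, together with $\|\al+\bt\|^2=\|\al\|^2+2(\al,\bt)+\|\bt\|^2$, and subtract $\eta_{rj}=\eta_\bt$. The $h_\bt$, $(\bt,\rho)$, and $\frac{1}{2}\|\bt\|^2$ contributions drop out, and the residual terms involving only $\al$ collapse by the simple-root identity $(\al,\rho)=\frac{1}{2}\|\al\|^2$ used above. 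What remains is $h_\al-(\al,\bt)=h_\al+(\al,\ve_j-\ve_r)$, as claimed.

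There is essentially no obstacle here: the statement is a linearization of the function $\al\mapsto\eta_\al$ around a simple-root displacement, and the whole proof reduces to the cancellation $(\al,\rho)-\frac{1}{2}\|\al\|^2=0$ that holds precisely for $\al\in\Pi^+$. The one small point worth flagging explicitly is that $\bt=\ve_r-\ve_j$ need not itself be a root, but this is immaterial because $h_{(\cdot)}$ extends linearly to $\Gm$ and the inner product is defined on all of $\h^*$, so the manipulation is purely formal in the lattice $\Gm$.
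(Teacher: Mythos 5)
Your proof is correct and takes essentially the same approach as the paper's: both reduce everything to the identity $(\al,\rho)=\frac{1}{2}\|\al\|^2$ for simple $\al$ and a direct expansion of $\eta_{lj}-\eta_{rj}$, your decomposition $\ve_l-\ve_j=\al+\bt$ with $\bt=\ve_r-\ve_j$ being just a slightly more spelled-out version of the paper's one-line norm computation. Nothing further is needed.
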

\begin{proof}
It is known that $(\al, \rho)=\frac{1}{2}||\al||^2$ for all $\al \in \Pi^+$, hence the first statement. Further,
$$
\eta_{lj}-\eta_{rj}=h_\al+\frac{1}{2}||\al||^2+\frac{1}{2}||\ve_j-\ve_r||^2-\frac{1}{2}||\ve_j-\ve_r-\al||^2=h_\al+(\al,\ve_j-\ve_r),
$$
as required.
\end{proof}
In what follows, we have to extend the Cartan subalgebra to its localization, $\hat U_q(\h)$
over the multiplicative system generated by $[\sum_{\al \in \Pi^+} m_\al h_\al+k_\al \frac{(\al,\al)}{2}]_q$ with
 integer $m_\al,k_\al$. We extend all quantum groups containing $U_q(\h)$ accordingly, in the straightforward way,
and mark them with hat, e.g. $\hat U_q(\g)$ {\em etc}.
The apperance of $h_{\ve_i}\in \h$, $\ve_i\in \La(V)$, formally requires an appropriate extension of $\hat U_q(\h)$.
However, the resulting formulas depend only on the differences $\ve_i-\ve_j\in \Gm$, and such an extension will be suppressed.

\section{Construction of intertwiner}
We call a sequence  $\vec m=(m_1,\ldots, m_k)$  a route from $\min \vec m=m_k$  to $\max \vec m=m_1$ if
$m_1> \ldots > m_k$. Let $|\vec m|=k$ designate the number of nodes in $\vec m$.
The set of all routes from $j$ to $i$ will be denoted
by $(i \leftarrow j)$.
A route is called path if it is maximal, i.e. all pairs $(m_s,m_{s+1})$ are simple.
If $\vec m$ is a path, then $|\vec m|-1$ is its length, i.e. the number of arrows. It is equal to
the number of simple positive roots constituting the weight $\ve_{\max \vec m}-\ve_{\min \vec m}$.
We use the notation
$\vec m > \vec n$ if $\min \vec m> \max \vec n$,
and $\vec m \geqslant  \vec n$ if $\max \vec m= \min \vec n$ (this relation is transitive on routes).
In this case we can construct a route $(\vec m, \vec n) \in (\max \vec m \leftarrow \min \vec n )$ by taking the union
of nodes in the prescribed order.

Given a route $\vec m=(m_1,\ldots, m_k)$  we define
$
f_{\vec m}=f_{m_1m_2}f_{m_2m_3}\ldots f_{m_{k-1}m_k},
$
where $f_{ij}\in U_q(\g_-)$ are the entries of the matrix $F=(\pi\tp \id)(\Fc)$.
Fix an ordered pair $(i,j)$, $i> j$, and consider a right $\hat U_q(\h)$-module $\Phi_{ij}$ freely generated by
all $f_{\vec m}$ such that  $ \vec m \in (i\leftarrow j)$.

For each simple pair $(l,r)\in P(\al)$ we introduce an operator $\prt_{lr}\colon \Phi_{ij}\to \hat U_q(\g)$
as follows.
Suppose that $(\vec \ell,l,r,\vec \rho)\in (i\leftarrow j) $ is a route.
Then put
$$
\begin{array}{rrccc}
\partial_{lr}f_{(\vec\ell,l)}f_{lr}f_{(r,\vec \rho)}&=&f_{(\vec\ell,l)}f_{(r,\vec \rho)}[\eta_{lj}-\eta_{rj}]_q,\\
\partial_{lr}f_{(\vec\ell,l)}f_{(l,\vec \rho)}&=&-f_{(\vec\ell,l)}f_{(r,\vec \rho)}q^{-\eta_{lj}+\eta_{rj}},
\\
\partial_{lr}f_{(\vec\ell,r)}f_{(r,\vec \rho)}&=&f_{(\vec\ell,l)}f_{(r,\vec \rho)}q^{\eta_{lj}-\eta_{rj}}.
\end{array}
$$
If  $\{l,r\}\cap \vec m=\emptyset$ or $\vec m\cup \{l,r\}$ is not a route from $(i\leftarrow j)$,  then we set $\prt_{lr}f_{\vec m}=0$.
Having defined $\prt_{lr}$ on the generators, we extend it to the entire $\Phi_{ij}$ by $\hat U_q(\h)$-linearity.

\begin{propn}
Let $p\colon \Phi_{ij}\to \hat U_q(\g)$ be the natural homomorphism of right $\hat U_q(\h)$-modules.
Then $e_\al \circ p= \sum_{(l,r)\in P(\al)}\pi^\al_{lr}\prt_{lr} \mod \hat U_q(\g)\g_+$ for each $\al\in \Pi^+$.
\label{localization}
\end{propn}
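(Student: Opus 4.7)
The plan is to read (\ref{key_id}) in the representation $V$ as a commutation relation between $e_\al$ and the matrix entries $f_{ij}$ of $F=(\pi\tp\id)(\Fc)$, and then propagate that relation along the route $\vec m$ by the Leibniz rule. Since $f_{\vec m}\,e_\al\in \hat U_q(\g)\g_+$ tautologically, the congruence $e_\al p(f_{\vec m})\equiv [e_\al,f_{\vec m}]\bmod \hat U_q(\g)\g_+$ is immediate, so the entire content lies in rewriting the commutator.

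Applying $(\pi\tp\id)$ to (\ref{key_id}) and reading off the $(i,j)$-entry in $\End V\tp U_q(\g_-)$ gives
\[
[e_\al,f_{ij}] = \delta_{(i,j)\in P(\al)}\pi^\al_{ij}[h_\al]_q + \sum_{l:(l,j)\in P(\al)}\pi^\al_{lj}\,f_{il}\,q^{h_\al} - \sum_{r:(i,r)\in P(\al)}\pi^\al_{ir}\,q^{-h_\al}\,f_{rj}.
\]
The Leibniz rule $[e_\al,f_{\vec m}]=\sum_s f_{(m_1,\ldots,m_s)}[e_\al,f_{m_sm_{s+1}}]f_{(m_{s+1},\ldots,m_k)}$ then produces three families of summands at each bond $(m_s,m_{s+1})$. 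Next I would push each residual factor $q^{\pm h_\al}$ or $[h_\al]_q$ through the tail $f_{(m_{s+1},\ldots,m_k)}$; since this tail has Cartan weight $\ve_j-\ve_{m_{s+1}}$, the push contributes a scalar $q^{\pm(\al,\ve_j-\ve_{m_{s+1}})}$ and leaves $q^{\pm h_\al}$ on the far right. By Lemma~\ref{thetas}, the combined exponent $h_\al+(\al,\ve_j-\ve_r)$ equals $\eta_{lj}-\eta_{rj}$, and analogously for the bracket $[\,\cdot\,]_q$.

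Under this translation the three families match the three cases in the definition of $\prt_{lr}$: the diagonal piece (nonvanishing only when $(m_s,m_{s+1})\in P(\al)$) yields the factor $[\eta_{lj}-\eta_{rj}]_q$ and agrees with Case~1 for $(l,r)=(m_s,m_{s+1})$; the $f_{il}q^{h_\al}$-piece, with summation index $l$ satisfying $\ve_l=\ve_{m_{s+1}}+\al$, inserts a new node $l$ above $r=m_{s+1}$ and reproduces Case~3 with factor $q^{\eta_{lj}-\eta_{rj}}$; the $-q^{-h_\al}f_{rj}$-piece inserts a new node $r$ below $l=m_s$ and reproduces Case~2 with factor $-q^{-\eta_{lj}+\eta_{rj}}$. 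Summing over the bond index $s$ and over $(l,r)\in P(\al)$ then recovers exactly $\sum_{(l,r)\in P(\al)}\pi^\al_{lr}\prt_{lr}(f_{\vec m})$.

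The main obstacle is combinatorial rather than algebraic. One must verify that the summation variables $l$ (in the second piece) and $r$ (in the third) never coincide with a node already in $\vec m$ except through an automatically vanishing factor $f_{nn}$; this is forced by the strict weight monotonicity of routes together with the weight constraint $\ve_l-\ve_{m_{s+1}}=\al\in\Pi^+$, which ruled out every offending index except the Case~1 position, where the factor is zero anyway. One must also check that the products $f_{(\vec\ell,l)}f_{(r,\vec\rho)}$ obtained by splitting $\vec m$ at the relevant position agree precisely with those appearing on the right-hand sides of the three defining formulae for $\prt_{lr}$, and that every nonzero case of the definition is produced once and only once by the Leibniz expansion.
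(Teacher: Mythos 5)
Your proposal is correct and follows essentially the same route as the paper's proof: reduce $e_\al\circ p$ to the commutator $[e_\al,f_{\vec m}]$ modulo $\hat U_q(\g)\g_+$, read (\ref{key_id}) entrywise in the basis of $V$, expand by the Leibniz rule along the bonds of $\vec m$, push the Cartan factors to the right to pick up the shift $h_\al+(\al,\ve_j-\ve_r)=\eta_{lj}-\eta_{rj}$ via Lemma~\ref{thetas}, and match the three resulting families with the three defining cases of $\prt_{lr}$. The combinatorial bookkeeping you flag at the end is handled in the paper by the same observation you make, namely that any offending insertion forces a vanishing factor $f_{rb}$ or $f_{al}$, in agreement with $\prt_{lr}f_{\vec m}=0$ in those cases.
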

\begin{proof}
First of all, observe that both operators commute with the right $U_q(\h)$-action modulo $U_q(\g)\g_+$, therefore
we can prove the equality on the basis $\{f_{\vec m}\}\subset  \Phi_{ij}$ replacing the left-hand side with the commutator $f_{\vec m}\mapsto [e_\al, f_{\vec m}]$, on identification of $f_{\vec m}$ with its image $p(f_{\vec m})$.
In the fixed basis of $V$, the intertwining relation (\ref{key_id}) translates to
\be
[e_\al, f_{ij}]&=&
\left\{\begin{array}{lrr}
 \pi^\al_{ij}[h_\al]_q,
 &
\quad (i,j)\in P(\al),
\\[10pt]
 \sum_{(l,r)\in P(\al)}\bigl( \dt_{jr}f_{il}\pi^\al_{lr}q^{h_\al}-q^{-h_\al}\dt_{li}\pi^\al_{lr}f_{rj}\bigr),
& (i,j)\not \in P(\al).
\end{array}
\right.
\label{i-rel-bas}
\ee
Denote by $B(\al)$ the union of all  $\{l,r\}\subset I$ such that $(l,r)\in P(\al)$.
Fix a route $\vec m \in (i\leftarrow j)$.
If  $\vec m \cap B(\al)=\emptyset$ then $\sum_{(l,r)\in P(\al)}\pi^\al_{lr}\prt_{lr}f_{\vec m}=0$ by construction.
At the same time, $[e_\al,f_{km}]=0$ for all  $f_{km}$ entering $f_{\vec m}$, hence $e_\al f_{\vec m}=0 \mod \hat U_q(\g)\g_+$.

Suppose that  $\vec m \cap B(\al)\not=\emptyset$.
The commutator with $e_\al$ differentiates the product $f_{\vec m}$ making it into a sum of terms
where $e_\al$ acts on just one factor:
$[e_\al, f_{\vec m}]=\sum f_{(\vec \ell, a)}[e_\al,f_{ab}]f_{(b,\vec \rho)}$.
If $[e_\al,f_{ab}]\not=0$, then either $a$ or $b$ belongs to $B(\al)$. There are three possibilities:
$(a,b) =(l,r)$ or $(a,b) =(l,b), (a,r)$ , where $(l,r)\in P(\al)$ and $r>b$ and $a> l$.
Then
$$
[e_{\al},f_{lr}]=\pi^\al_{lr}[h_\al]_q, \quad [e_{\al},f_{lb}]=-\pi^\al_{lr}q^{-h_\al}f_{rb},\quad [e_{\al},f_{ar}]=\pi^\al_{lr}f_{al}q^{h_\al},
$$
in these three cases.
We can assume that $(l,r)\cup \vec m$ is a route, since otherwise
 $r\not >b \Rightarrow f_{rb}=0$,  $a\not >l\Rightarrow f_{al}=0$, and the last two commutators
turn zero along with  $\prt_{lr}f_{\vec m}$.
Then the Cartan factors are pushed to the rightmost position and acquire the shift
 $h_\al\mapsto h_\al + (\al,\ve_j-\ve_r)=\eta_{lj}-\eta_{rj}$, by Lemma \ref{thetas}.
 Therefore,
the corresponding summand $f_{(\vec \ell, a)}[e_\al,f_{ab}]f_{(b,\vec \rho)}$ is equal to $\pi^\al_{lr}\prt_{lr}f_{\vec m}$.
This proves the equality on the $U_q(\h)$-basis of $\Phi_{ij}$.
\end{proof}
Fix $j$ and introduce $A_i^j\in \hat U_q(\h)$ for all $i>j$ by
$$
A_{i}^{j}=\varphi(-\eta_{ij}),\quad \varphi(x)=\frac{q^{-x}}{[x]_q}.
$$
Note that $A_{i}^{j}$ depends only on the weight difference $\ve_i- \ve_j$.
For  $\vec m=(m_1,\ldots, m_k)$, $\vec m > j$, put $A_{\vec m}^{j}=A_{m_1}^{j} \ldots A_{m_k}^{j}.$
Fix a simple pair $(l,r)$ and suppose that $i\geqslant \vec\ell >l> r> \vec\rho > j$
for some $\vec \ell$ and $\vec \rho$. Define an open $(l,r)$-chain to be the sum
\be
f_{(\vec \ell,r)}f_{(r,\vec \rho,j)}A_{(\vec \ell,r,\vec \rho)}^{j} +f_{(\vec \ell,l)}f_{lr}f_{(r,\vec \rho,j)}A_{(\vec \ell,l,r,\vec \rho)}^{j} +f_{(\vec \ell,l)}f_{(l,\vec \rho,j)}A_{(\vec \ell,l,\vec \rho)}^{j}\quad \in \Phi_{ij},
\label{3-chain}
\ee
assuming  $\vec \ell\not =\emptyset \not =\vec \rho$.
Also, we define boundary $(l,r)$-chains as
$$
f_{(\vec \ell,j)}A_{\vec \ell}^{j} +f_{(\vec \ell,l)}f_{lj}A_{(\vec \ell,l)}^{j}, \quad f_{ir}f_{(r,\vec \rho,j)}A_{(r,\vec \rho)}^{j} +f_{(i,\vec \rho,j)}A_{\vec \rho}^{j}\quad \in \Phi_{ij},
$$
where  $\vec \ell\not =\emptyset $ and $\vec \rho\not =\emptyset$. In particular, the pair
$(i,j)$ is assumed non-simple.
\begin{lemma}
The operator $\prt_{lr}$ annihilates open $(l,r)$-chains.
\label{open_chains}
\end{lemma}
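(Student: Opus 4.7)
The plan is to apply $\partial_{lr}$ term-by-term to the three summands of the chain using the three defining rules, factor out a common $f$-product, and verify that the remaining Cartan scalar vanishes by a direct computation with $q$-numbers.

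First I would observe that the three routes underlying the chain, namely $(\vec\ell,r,\vec\rho,j)$, $(\vec\ell,l,r,\vec\rho,j)$, and $(\vec\ell,l,\vec\rho,j)$, all lie in $(i\leftarrow j)$ precisely when $\vec\ell>l>r>\vec\rho>j$, so each of the three defining rules for $\partial_{lr}$ is applicable in the expected case. Applying them gives
\begin{align*}
\partial_{lr}\bigl(f_{(\vec\ell,r)}f_{(r,\vec\rho,j)}\bigr) &= f_{(\vec\ell,l)}f_{(r,\vec\rho,j)}\,q^{\eta_{lj}-\eta_{rj}},\\
\partial_{lr}\bigl(f_{(\vec\ell,l)}f_{lr}f_{(r,\vec\rho,j)}\bigr) &= f_{(\vec\ell,l)}f_{(r,\vec\rho,j)}\,[\eta_{lj}-\eta_{rj}]_q,\\
\partial_{lr}\bigl(f_{(\vec\ell,l)}f_{(l,\vec\rho,j)}\bigr) &= -f_{(\vec\ell,l)}f_{(r,\vec\rho,j)}\,q^{-\eta_{lj}+\eta_{rj}}.
\end{align*}
Since $\partial_{lr}$ is right $\hat U_q(\h)$-linear, the coefficients $A_{(\ldots)}^j$ simply pass through, and after factoring the common $f$-product together with the common prefix $A_{\vec\ell}^j$ and suffix $A_{\vec\rho}^j$ the vanishing reduces to the scalar identity
\begin{equation*}
q^{\eta_{lj}-\eta_{rj}}\,A_r^{j} + [\eta_{lj}-\eta_{rj}]_q\,A_l^{j}A_r^{j} - q^{-\eta_{lj}+\eta_{rj}}\,A_l^{j} = 0
\end{equation*}
in $\hat U_q(\h)$.

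To dispatch this identity I would use $A_i^j=\varphi(-\eta_{ij})=-q^{\eta_{ij}}/[\eta_{ij}]_q$, clear denominators by multiplying through by $[\eta_{lj}]_q[\eta_{rj}]_q(q-q^{-1})$, and unfold every $q$-bracket via $(q-q^{-1})[x]_q=q^x-q^{-x}$. The resulting expression is a sum of six monomials in $q^{\pm\eta_{lj}}$ and $q^{\pm\eta_{rj}}$ that cancel pairwise, yielding zero. This is the only real computation in the argument.

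The main obstacle is purely bookkeeping: one must be careful about the signs produced by $[-x]_q=-[x]_q$, about the fact that the $q$-shifts $q^{\pm(\eta_{lj}-\eta_{rj})}$ coming from the three rules differ from the shifts absorbed into the $A$-factors, and about checking that in each of the three invocations the auxiliary "extended" route $(\vec\ell,l,r,\vec\rho,j)$ is indeed valid (which is exactly the hypothesis $\vec\ell\neq\emptyset\neq\vec\rho$ plus the chain condition); once these are aligned the cancellation is automatic.
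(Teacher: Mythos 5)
Your argument is correct and follows the same route as the paper: apply the three defining rules of $\partial_{lr}$ term by term, factor out the common product $f_{(\vec\ell,l)}f_{(r,\vec\rho,j)}$, and reduce the claim to a scalar identity in $\hat U_q(\h)$ (yours is the paper's identity multiplied by $A_l^jA_r^j$, and your six-monomial cancellation checks out). No gaps.
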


\begin{proof}
Applying the local operator $\prt_{lr}$ to (\ref{3-chain}) we get
$$
f_{(\vec \ell,l)}f_{(r,\vec \rho,j)}\Bigl(q^{\eta_{lj}-\eta_{rj}}A_{(\vec \ell,r,\vec \rho)}^{j} +[\eta_{lj}-\eta_{rj}]_qA_{(\vec \ell,l,r,\vec \rho)}^{j} -q^{-\eta_{lj}+\eta_{rj}}A_{(\vec \ell,l,\vec \rho)}^{j}\Bigr).
$$
Division by $A_{(\vec \ell,l,r,\vec \rho')}^{j}$
of the factor in the brackets gives
$
\frac{q^{\eta_{lj}-\eta_{rj}}}{A_{l}^{j}} +[\eta_{lj}-\eta_{rj}]_q -\frac{q^{-\eta_{lj}+\eta_{rj}}}{A_{r}^{j}}=0.
$
Therefore this factor is zero, and the proof is complete.
\end{proof}

\begin{lemma}
The local operators $\prt$ act on boundary chains by
$$
 f_{ii'}f_{(i',\vec \rho,j)}A_{(i,i',\vec \rho)}^j +f_{(i,\vec \rho,j)}A_{(i,\vec \rho)}^j
\stackrel{\prt_{ii'}}{\longmapsto}
 -q^{h_{\ve_i}-h_{\ve_{i'}}}f_{(i',\vec \rho,j)}A_{(i',\vec \rho)}^{j},
\quad
f_{(i,\vec \ell,j)}A_{\vec \ell}^{j} +f_{(i,\vec \ell,j')}f_{j'j}A_{(i,\vec \ell,j')}^{j}
\stackrel{\prt_{j'j}}{\longmapsto} 0,
$$
where $i> \vec \ell> j'$ and $i'> \vec \rho > j$.
\label{boundary_chains}
\end{lemma}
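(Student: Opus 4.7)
The plan is a direct, if slightly tedious, calculation: apply the three rules defining $\prt_{lr}$ on the generating monomials $f_{\vec m}$ to each summand of the boundary chain, then collapse the resulting $\hat U_q(\h)$-coefficients using the explicit form of $A_k^j=\varphi(-\eta_{kj})$.

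For the first assertion, concerning $\prt_{ii'}$ on a left-boundary chain, I would first observe that since every route in $(i\leftarrow j)$ starts at $i$ (the maximum), the third defining rule of $\prt_{ii'}$, ``insertion of $l=i$ before $i'$'', cannot apply: there is no room to the left of $i$. So only the first rule fires on the monomial $f_{ii'}f_{(i',\vec\rho,j)}$, whose route contains the pair $(i,i')$, and only the second rule fires on $f_{(i,\vec\rho,j)}$, whose route contains $l=i$ but not $r=i'$. Factoring out the common right factor $f_{(i',\vec\rho,j)}$ and the commuting Cartan factor $A^j_iA^j_{\vec\rho}$ reduces the claim to the scalar identity
\[ \bigl([\eta_{ij}-\eta_{i'j}]_q\, A^j_{i'}-q^{-\eta_{ij}+\eta_{i'j}}\bigr)A^j_i \;=\; -q^{\eta_{ij}-\eta_{i'j}}A^j_{i'}. \]
Substituting $A^j_k=-q^{\eta_{kj}}/[\eta_{kj}]_q$ (using $[-x]_q=-[x]_q$) and clearing denominators, this collapses to the elementary $q$-number relation $q^\alpha[\alpha]_q-q^\beta[\beta]_q=q^{\alpha+\beta}[\alpha-\beta]_q$, verified by expanding each $[\,\cdot\,]_q$. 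To land on the RHS of the lemma I then invoke Lemma~\ref{thetas} to identify $q^{\eta_{ij}-\eta_{i'j}}$ with $q^{h_{\ve_i}-h_{\ve_{i'}}}$ after commuting the latter past $f_{(i',\vec\rho,j)}$, which carries weight $\ve_j-\ve_{i'}$.

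For the second assertion, concerning $\prt_{j'j}$ on a right-boundary chain, $r=j$ is now the minimum node of every route, so the second rule cannot fire. The third rule then applies to $f_{(i,\vec\ell,j)}$ (route contains $r=j$ but not $l=j'$) and the first rule to $f_{(i,\vec\ell,j')}f_{j'j}$ (route contains both). Using $\eta_{jj}=0$ and grouping the common left factor, the sum collapses to the scalar identity
\[ q^{\eta_{j'j}}+[\eta_{j'j}]_q\, A^j_{j'}\;=\;0, \]
which is immediate from the defining formula $A^j_{j'}=-q^{\eta_{j'j}}/[\eta_{j'j}]_q$.

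The only substantive obstacle is bookkeeping: one must correctly track which of the three $\prt_{lr}$ rules is active at each boundary, noting that the exclusion is positional rather than computational; keep the sign $[-x]_q=-[x]_q$ under control throughout the localization factors; and correctly pick up the weight shift when $q^{h_{\ve_i}-h_{\ve_{i'}}}$ is pulled past the remaining $f$-monomial. Once Lemma~\ref{thetas} is used to align shifts, the verification is entirely mechanical.
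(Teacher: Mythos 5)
Your proof is correct and follows essentially the same route as the paper's: apply the defining rules of $\prt_{lr}$ to each monomial of the chain, factor out the common $f$-monomial and Cartan factors, and reduce to a scalar identity in $\hat U_q(\h)$ that is settled by $A^j_k=-q^{\eta_{kj}}/[\eta_{kj}]_q$ together with Lemma~\ref{thetas} and the commutation of $q^{h_\al}$ past the weight-$(\ve_j-\ve_{i'})$ factor. The only cosmetic difference is that you spell out why only two of the three rules can fire at a boundary, which the paper leaves implicit.
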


\begin{proof}
Put $\al=\ve_{j'}-\ve_j$. Application of $\prt_{j'j}$ to the right chain gives
$$
f_{(i,\vec \ell,j')}A_{(i,\vec \ell)}^{j}(q^{h_\al}+[h_\al]_qA_{j'}^{j})
=f_{(i,\vec \ell,j')}A_{(i,\vec \ell)}^{j}\frac{[h_\al-\eta_{j'j}]_q}{[\eta_{j'j}]_q}=0,
$$
since $\eta_{j'j}=h_\al$, by Lemma \ref{thetas}. To prove the left formula, we put $\al=\ve_i-\ve_{i'}$. Application of $\prt_{ii'}$ gives
$$
f_{(i',\vec \rho,j)}([\eta_{ij}-\eta_{i'j}]_qA_{i'}^{j} -q^{-\eta_{ij}+\eta_{i'j}})A_{(i,\vec \rho)}^{j}.
$$
The expression in the brackets is equal to
$-\frac{[\eta_{ij}]_q}{[\eta_{i'j}]_q}=-\frac{A^j_{i'}}{A^j_{i}}q^{\eta_{ij}-\eta_{i'j}}
=-\frac{A^j_{i'}}{A^j_{i}}q^{h_{\al}+(\al,\ve_j-\ve_{i'})}.
$
Observe that $\frac{A^j_{i'}A_{(i,\vec \rho)}^{j}}{A^j_{i}}=A_{(i',\vec \rho)}^{j}$.
Now recall that the weight of $f_{(i',\vec\rho,j)}$ is $\ve_j-\ve_{i'}$ and push the factor $q^{h_{\al}}=q^{h_{\ve_{i}}-h_{\ve_{i'}}}$ to the leftmost position.
This completes the proof.
\end{proof}

Define a matrix $\hat F\in \End(V)\tp \hat U_q(\b_-)$ by
\be
\hat F=1\tp 1+\sum_{i>j}e_{ij}\tp \hat f_{ij}, \quad \hat f_{ij}=\sum_{i\geqslant \vec m> j}f_{(\vec m, j)}A_{\vec m}^{j}.
\label{dynamization}
\ee
We can also define $\hat f_{ii}=1$ and $\hat f_{ij}=0$ if $i\not\geqslant j$ making
$\hat F=\sum_{i,j\in I}e_{ij}\tp \hat f_{ij}$.
Since all weight subspaces are finite dimensional, the number of routes in $(i\leftarrow j)$ is finite and
summation is well defined.
\begin{propn}
For all $x\in U_q(\g_+)$,
$
\Delta(x)\hat F= \eps(x)\hat F \mod
\End(V)\tp \hat \U_q(\g)\g_+.
$
\label{hat F}
\end{propn}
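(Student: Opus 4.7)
The plan is to reduce the claim to the simple Chevalley generators $x = e_\al$, $\al \in \Pi^+$, and then combine Proposition \ref{localization} with Lemmas \ref{open_chains} and \ref{boundary_chains} into a term-by-term cancellation. Since $U_q(\g_+)$ is generated by $\{e_\al\}_{\al \in \Pi^+}$, $\End(V) \tp \hat U_q(\g)\g_+$ is a left ideal, and both $\Delta$ and $\eps$ are multiplicative, a short induction on the length of $x$ in the $e_\al$-generators shows that it suffices to check $\Delta(e_\al)\hat F \equiv 0$ modulo the ideal (note $\eps(e_\al) = 0$).

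For $x = e_\al$ I would expand $\Delta(e_\al)\hat F = (\pi(e_\al) \tp q^{h_\al})\hat F + (1 \tp e_\al)\hat F$. Using $\pi(e_\al) = \sum_{(l,r) \in P(\al)} \pi^\al_{lr} e_{lr}$ and the matrix unit identity $e_{lr}e_{ij} = \dt_{ri}e_{lj}$, the first summand rewrites as $\sum_{(l,r) \in P(\al)} \pi^\al_{lr} \sum_{j \leq r} e_{lj} \tp q^{h_\al} \hat f_{rj}$, adopting the convention $\hat f_{rr} = 1$. In the second summand the constant piece $1 \tp e_\al$ already lies in the ideal; for the remainder, each $\hat f_{ij}$ is identified with the image $p(\xi_{ij})$ of its natural preimage $\xi_{ij} \in \Phi_{ij}$ from (\ref{dynamization}), and Proposition \ref{localization} yields $e_\al \hat f_{ij} \equiv \sum_{(l,r) \in P(\al)} \pi^\al_{lr} \prt_{lr}(\xi_{ij})$ modulo $\hat U_q(\g)\g_+$.

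The core is then the evaluation of $\prt_{lr}(\xi_{ij})$ for each fixed $(l,r) \in P(\al)$. I would group the routes entering $\xi_{ij}$ by how they traverse the $(l,r)$-arrow: generic triples $\{(\vec\ell, r, \vec\rho), (\vec\ell, l, r, \vec\rho), (\vec\ell, l, \vec\rho)\}$ with $\vec\ell, \vec\rho$ non-degenerate form open chains that Lemma \ref{open_chains} annihilates; pairs obtained when $j = r$ form bottom boundary chains killed by the second formula of Lemma \ref{boundary_chains}; and pairs obtained when $i = l$ form top boundary chains sent by the first formula of Lemma \ref{boundary_chains} to $-q^{h_{\ve_l} - h_{\ve_r}} f_{(r,\vec\rho,j)} A^j_{(r,\vec\rho)}$. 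Summing over $\vec\rho$ telescopes into $\prt_{lr}(\xi_{lj}) = -q^{h_\al}\hat f_{rj}$ for every $j \leq r$ (the corner $j = r$ emerging as the one-term calculation $[\eta_{lr}]_q A^r_l = -q^{h_\al}$ via Lemma \ref{thetas}), while $\prt_{lr}(\xi_{ij}) = 0$ whenever $i \neq l$.

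Plugging these back, the second summand becomes $-\sum_{(l,r) \in P(\al)} \pi^\al_{lr} \sum_{j \leq r} e_{lj} \tp q^{h_\al}\hat f_{rj}$, which cancels the first summand exactly, so $\Delta(e_\al)\hat F \equiv 0$ modulo $\End(V) \tp \hat U_q(\g)\g_+$ as required. The delicate point I expect to wrestle with is the combinatorial decomposition of $\xi_{ij}$: one must verify that every route contributing to $\xi_{ij}$ belongs to exactly one open or boundary chain, and that routes for which a would-be chain partner is degenerate (e.g.\ those with $\max \vec\rho \not< r$ or $\min \vec\ell \not> l$) are individually killed by the vanishing of $f_{ab}$ for $a \not> b$, so that no stray boundary-like terms survive the telescoping.
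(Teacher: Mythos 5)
Your proposal is correct and follows essentially the same route as the paper: reduce to the generators $e_\al$, use Proposition \ref{localization} to replace $e_\al\hat f_{ij}$ by $\sum_{(l,r)\in P(\al)}\pi^\al_{lr}\prt_{lr}$, and partition the routes in $\hat f_{ij}$ into open and boundary $(l,r)$-chains so that Lemmas \ref{open_chains} and \ref{boundary_chains} leave only the top boundary contribution $-q^{h_\al}\hat f_{rj}$, cancelling the term $(\pi(e_\al)\tp q^{h_\al})\hat F$. The only cosmetic deviation is that you absorb the simple-pair corner case into the chain calculus via $[\eta_{lr}]_qA^r_l=-q^{h_\al}$, whereas the paper treats it separately through (\ref{i-rel-bas}); the delicate grouping of routes into disjoint chains that you flag is exactly the point the paper also relies on.
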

\begin{proof}
We need to prove
$
e_\al \hat f_{ij}= -\sum_{r\in I}\pi(e_\al)_{ir}q^{h_{\al}} \hat f_{rj} \mod
\hat U_q(\g)\g_+
$
for all $\al\in\Pi^+$ and  $i,j\in I$, $i\geqslant j$.
First suppose that the pair $(i,j)$ is not simple.
Following Proposition \ref{localization} we reduce the left-hand side to the localization
$\sum_{(l,r)\in P(\al)}\pi^\al_{lr}\prt_{lr} \hat f_{ij} = e_\al \hat f_{ij}\mod \hat U_q(\g)\g_+$.
Evaluating $\prt_{lr} \hat f_{ij}$ we  retain only those routes $(\vec m,j)$  in summation (\ref{dynamization}) which
intersect with  $(l,r)$ and extend to a path from $j$ to $i$ passing through $(l,r)$. Summation over such routes can be arranged in
either a) open three-chains if $i>(l,r)>j$  or
b) right/left boundary chains if, respectively, $j=r$ or $i=l$. By Lemmas \ref{open_chains} and \ref{boundary_chains}, only the last case contributes to $\prt_{lr} \hat f_{ij}$,
and the contribution is equal to $-q^{h_{\al}}\hat f_{rj}$.

If the pair $(i,j)$ is simple and $\al=\ve_i-\ve_j$, then $\hat f_{ij}=-f_{ij}\frac{q^{\eta_{ij}}}{[\eta_{ij}]_q}
=-f_{ij}\frac{q^{h_\al}}{[h_\al]_q}$. So, by (\ref{i-rel-bas}),  $e_\al\hat f_{ij}=-\pi(e_\al)_{ij}q^{h_\al}$ modulo $\hat U_q(\g)\g_+$. Finally, the obvious equality $
e_\al \hat f_{ij}= 0\mod \hat U_q(\g)\g_+$ for $i=j$ completes the proof.
\end{proof}
\begin{remark}
The matrix $\hat F$ can be obviously defined for every weight module $V$ over $U_q(\b_+)$. Moreover, it
depends only on the structure of $U_q(\g_+)$-module on $V$. For example, it coincides for all Verma modules of lowest weight, see Section \ref{Sec_Shap_form}.
\end{remark}
\section{Universal matrix $\hat \Fc$}

In this section, we introduce a "universal matrix" $\hat \Fc$ producing $\hat F$ in representations.
Upon passing to the $\C[\![\hbar ]\!]$-algebra $\hat U_\hbar(\g)$, set $h\cdot h=\sum_{s=1}^{\rk \g}h_s^2\in U_\hbar(\h) $, where $\{h_s\}_{s=1}^{\rk \g}$ is an orthonormal basis in $\h$.
Introduce $d\in U_\hbar(\h)$ by
\be
d= \frac{1}{2}h\cdot h + h_\rho.
\label{d}
\ee
The commutator $[d, \cdot\> ]$ acts on the subspace of weight $-\mu$ in $\hat U_\hbar(\b_-) $
as multiplication by $-\eta_\mu$ on the right. Therefore,
$\frac{q^{2[d, \cdot\> ]}-\id}{q-q^{-1}}$ is invertible on $\hat U_q(\b_{-})\g_-$, and its  inverse $\varphi([d, \cdot\> ])$
acts as right multiplication by $\varphi(-\eta_\mu )$ on each subspace of weight $-\mu<0$.
Then $\varphi(D)$, with $D=\id \tp [d, \cdot\> ]$, is well defined
on the subspace of zero weight in $U_q(\g_{+})\g_+\tp \hat U_q(\b_-)\g_-$.
We do not need $\C[\![\hbar]\!]$-extension any longer.

Define tensors $\Fc^{(k)}\in  U_q(\g_+)\tp \hat U_q(\b_-) $, $k\in\Z_+$, as follows. Put $\Fc^{(0)}= 1\tp 1$, and,  by induction,
$\Fc^{(k+1)}=\varphi(D)\bigl( \Fc\Fc^{(k)}\bigr)$ for $k\geqslant 0$. Define
\be
\hat\Fc=\sum_{k=0}^\infty\Fc^{(k)}\in U_q(\g_+)\tp \hat U_q(\b_-),
\label{geom F}
\ee
where the tensor product is completed in the topology relative to the natural grading in $U_q(\g_+)\tp \hat U_q(\b_-)$
(projective limit).
 \begin{propn}
For every representation $\pi\colon U_q(\b_+)\to \End(V)$ on a weight module $V$, the matrix $(\pi\tp \id)(\hat \Fc)\in  \End(V)\tp \hat  U_q(\b_-)$ coincides with
$\hat F$.
\label{univ_project}
\end{propn}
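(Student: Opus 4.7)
My plan is to establish the equality $(\pi\otimes\id)(\hat \Fc)=\hat F$ level by level in the iteration index $k$ of the series $\hat \Fc=\sum_{k\geqslant 0}\Fc^{(k)}$. To this end, I decompose $\hat F=\sum_{k\geqslant 0}F^{[k]}$, where $F^{[0]}=\sum_i e_{ii}\otimes 1$ and, for $k\geqslant 1$, $F^{[k]}$ is the contribution to (\ref{dynamization}) from those routes $\vec m$ with exactly $k$ nodes, namely
\[
F^{[k]}=\sum_{i>j}\;\sum_{\substack{\vec m:\;\max\vec m=i,\\ \min\vec m>j,\;|\vec m|=k}} e_{ij}\otimes f_{(\vec m,j)}A_{\vec m}^{j}.
\]
It then suffices to prove $(\pi\otimes\id)(\Fc^{(k)})=F^{[k]}$ for every $k\geqslant 0$ by induction; the base $k=0$ is immediate from $\Fc^{(0)}=1\otimes 1$.

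The cornerstone of the induction is an explicit description of $\varphi(D)$. A short quadratic-Casimir calculation, using $d=\tfrac{1}{2}\sum_s h_s^2+h_\rho$, the identity $[h_s^2,x]=-2\mu(h_s)\,xh_s+\mu(h_s)^2 x$ for $x\in\hat U_q(\b_-)$ of weight $-\mu$, and the orthonormal-basis sums $\sum_s\mu(h_s)h_s=h_\mu$, $\sum_s\mu(h_s)^2=\|\mu\|^2$, yields $[d,x]=-x\cdot\eta_\mu$. Consequently, $D$ acts on a zero-total-weight tensor $y\otimes x$ with $x$ of weight $\ve_j-\ve_i$ as right multiplication of the second leg by $-\eta_{ij}$, and $\varphi(D)$ as right multiplication by $\varphi(-\eta_{ij})=A_i^{j}$.

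For the inductive step, I apply $(\pi\otimes\id)$ to $\Fc^{(k+1)}=\varphi(D)(\Fc\Fc^{(k)})$. Using the matrix-unit rule $e_{ab}e_{ij}=\delta_{bi}e_{aj}$, the product $F\cdot F^{[k]}$ retains only terms with $b=i=\max\vec m$, so that prepending $a$ to $\vec m$ yields a fresh length-$(k+1)$ route $\vec m'=(a,\vec m)$ with $\max\vec m'=a$; the accompanying right-leg factor is $f_{(\vec m',j)}A_{\vec m}^{j}$, i.e.\ the full product of $f$'s with the $A$-product still missing precisely $A_a^{j}$. Applying $\varphi(D)$ multiplies on the right by $A_a^{j}$, and commutativity inside $\hat U_q(\h)$ reassembles $A_{\vec m}^{j}A_a^{j}=A_{\vec m'}^{j}$, giving $F^{[k+1]}$. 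Every length-$(k+1)$ route arises uniquely by peeling off its maximum, so the sum is exhaustive.

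The main obstacle is the combinatorial matching: verifying that the matrix-unit product generates each length-$(k+1)$ route exactly once from a length-$k$ one, and that the $A$-factor produced by $\varphi(D)$ is precisely the one needed to upgrade $A_{\vec m}^{j}$ to $A_{\vec m'}^{j}$. Convergence is not at issue, because for each fixed pair $(i,j)$ the set of routes in $(i\leftarrow j)$ is finite (only finitely many weights of $V$ lie strictly between $\ve_j$ and $\ve_i$, with finite weight multiplicities), so $\hat F=\sum_k F^{[k]}$ holds termwise in $\End(V)\otimes\hat U_q(\b_-)$.
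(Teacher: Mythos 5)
Your proposal is correct and follows essentially the same route as the paper: the key observation in both is that $\varphi(D)$ acts on the weight-$(\ve_j-\ve_i)$ component of the second leg as right multiplication by $A_i^j=\varphi(-\eta_{ij})$, after which the matrix-unit contraction identifies the $k$-th iterate with the sum over routes of length $k$ (the paper expands the full nested expression for $f^{(k+1)}_{ij}$ at once, whereas you run an induction on $k$, which is only a cosmetic difference). The only point worth stating explicitly is that terms with $a\not> \max\vec m$ drop out because $f_{ab}=0$ unless $\ve_a-\ve_b\in\Gm^+$, which is how the paper justifies restricting to decreasing index chains.
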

\begin{proof}
Both matrices have their $(i,j)$-entries
equal to $1$ if $i=j$ and to $0$ if $i<j$ or incomparable. Now suppose that $i>j$ and write
$$
e_{ij}\tp f^{(k+1)}_{ij}=
\sum_{m_1\in I}\ldots \sum_{m_k\in I}
 \varphi(D)\bigl((e_{i,m_1}\tp f_{i,m_1})\varphi(D)\bigl(\ldots \varphi(D)(e_{m_k,j}\tp f_{m_k,j})\bigr),
$$
where $f^{(k+1)}_{ij}$ are the entries of the matrix $(\pi\tp \id)(\Fc^{(k+1)})=\sum_{i,j\in I}e_{ij}\tp f^{(k+1)}_{ij}$.
In the above summation, we retain only the terms with $i>m_1>\ldots >m_k>j$, since
one of the $f$-factors turns zero otherwise.
Observe that $\varphi(D)(e_{ij}\tp X_{ij}) =e_{ij}\tp X_{ij}A^j_i $ for every element $X_{ij}\in \hat U_q(\b_-)$
of weight $\ve_j- \ve_i$. Then
one can easily check that each summand is equal to
$$
(e_{i,m_1}\tp f_{i,m_1})\ldots (e_{m_k,j}\tp f_{m_k,j})A^j_{(i,\vec m)}
 =
e_{ij}\tp f_{(i,\vec m,j)}A^j_{(i,\vec m)}.
$$
Varying $\vec m =(m_1,\ldots, m_k)$
within  $i>\vec m > j$ including  $\vec m =\emptyset$ we cover all routes in $(i\leftarrow j)$,
$$
 \sum_{k=0}^\infty e_{ij} \tp f_{ij}^{(k)}= e_{ij} \tp \hat f_{ij},
$$
as required.
\end{proof}
\noindent
\begin{corollary}
\label{natural}
The tensor $\hat F$ is independent of  basis in $V$.
Let $V$ and $V'$ be weight $U_q(\b_+)$-modules
and $\hat F\in V\tp  \hat U_q(\b_-)$, $\hat F'\in V'\tp \hat U_q(\b_-)$.
Then $(\phi\tp 1)\hat F=\hat F '(\phi\tp 1)$ for every  $U_q(\g_+)$-homomorphism $\phi\colon V\to V'$.
\end{corollary}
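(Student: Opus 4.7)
The plan is to read off both assertions from Proposition \ref{univ_project}, which identifies $\hat F$ with the image $(\pi\tp \id)(\hat\Fc)$ of the universal tensor $\hat\Fc\in U_q(\g_+)\tp \hat U_q(\b_-)$. Since $\hat\Fc$ is built intrinsically from $\Fc$ and the operator $\varphi(D)$, with no reference to any basis of $V$, basis-independence of $\hat F$ is immediate: the original definition~(\ref{dynamization}) of $\hat F$ via matrix units used a fixed weight basis $\{v_i\}_{i\in I}$, but Proposition \ref{univ_project} shows that the resulting element of $\End(V)\tp \hat U_q(\b_-)$ coincides with the manifestly basis-free $(\pi\tp \id)(\hat\Fc)$; hence any other weight basis produces the same tensor.

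For the naturality statement, write $\hat \Fc=\sum_\alpha X_\alpha\tp Y_\alpha$ with $X_\alpha\in U_q(\g_+)$, $Y_\alpha\in \hat U_q(\b_-)$, understood as a sum in the completion with only finitely many contributions in each fixed weight stratum of the right leg. Viewing $\hat F$ and $\hat F'$ as operators on $V\tp \hat U_q(\b_-)$ and $V'\tp \hat U_q(\b_-)$ by left action on the first leg, for $v\in V$ and $y\in \hat U_q(\b_-)$ one has
\begin{align*}
(\phi\tp 1)\hat F(v\tp y)&=\sum_\alpha \phi\bigl(\pi(X_\alpha)v\bigr)\tp Y_\alpha y,\\
\hat F'(\phi\tp 1)(v\tp y)&=\sum_\alpha \pi'(X_\alpha)\phi(v)\tp Y_\alpha y,
\end{align*}
and these coincide because $\phi$ is $U_q(\g_+)$-linear, so $\phi\circ \pi(X_\alpha)=\pi'(X_\alpha)\circ \phi$ for every~$\alpha$.

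The one point that requires a moment's care is the interplay with the projective-limit topology in which $\hat\Fc$ is defined. Because all weight subspaces of $V$ and $V'$ are finite dimensional and each set of routes $(i\leftarrow j)$ is finite, the series defining $\hat F$ and $\hat F'$ truncate in each fixed weight component of $\hat U_q(\b_-)$, so the term-by-term argument above is well defined weight-by-weight and assembles to the global identity. I do not anticipate a genuine obstacle here: the substance of the corollary is simply that universality of $\hat\Fc$ reduces both claims to tautological linearity statements about $\phi$, and the only thing worth writing down is the verification that completion does not interfere with the coefficient-by-coefficient manipulation.
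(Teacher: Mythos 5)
Your proof is correct and takes essentially the same route as the paper, whose entire argument is the one-line remark that the corollary ``readily follows from the existence of the universal matrix $\hat \Fc$''; you simply make explicit the reduction to Proposition \ref{univ_project}, the use of $\phi\circ\pi(X_\alpha)=\pi'(X_\alpha)\circ\phi$ for $X_\alpha\in U_q(\g_+)$, and the harmless interaction with the completion.
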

\begin{proof}
Readily follows from the existence of the universal matrix $\hat \Fc$.
\end{proof}
It is also convenient to refine the ordering on $I$.
We write $i\succ j$ if and only if $i>j$ and there is a sequence $i=m_1>\ldots> m_k>j$ such that
$f_{m_l,m_{l+1}}\not =0$ for all $k=1,\ldots, k-1$ (it is transitive since $U_\g(\g_-)$ has no zero divisors).
In particular, there is a sequence of simple roots $\bt_1,\ldots, \bt_{k-1}\in \Pi^+$
and a sequence of indices $i=m_1,\ldots,m_{k}=j\in I$ such that
$\pi^{\bt_1}_{m_1,m_2}\ldots \pi^{\bt_{k-1}}_{m_{k-1},i_{k}}\not =0$.
 The Hasse diagram $(\Hc,\succ)$ is a subdiagram in $(\Hc,>)$. It is generally not full, i. e. not all arrows
 connecting nodes in $(\Hc,>)$ are in $(\Hc,\succ)$.
\begin{lemma}
The matrices $(\hat F,{>})$ and $(\hat F,{\succ })$ are equal.
\end{lemma}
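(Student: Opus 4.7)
The plan is to show that every contribution present in $(\hat F, >)$ but absent from $(\hat F, \succ)$ already vanishes, so the two matrices have identical entries. There are two sources of discrepancy I need to account for: off-diagonal slots $(i,j)$ with $i > j$ but $i \not\succ j$, and route contributions to $\hat f_{ij}$ coming from $>$-decreasing sequences that contain a step which is not a $\succ$-relation.

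First I would handle the case $i > j$ with $i \not\succ j$. By the very definition of $\succ$, saying $i \not\succ j$ is equivalent to asserting that every decreasing sequence $i = m_0 > m_1 > \cdots > m_r = j$ contains at least one index $l$ with $f_{m_l,m_{l+1}} = 0$. Since the summation defining $\hat f_{ij}$ under the $>$-order runs precisely over such sequences and each summand factors as $f_{(i,\vec m,j)} A^j_{(i,\vec m)}$ with $f_{(i,\vec m,j)}$ a product of $f_{m_l,m_{l+1}}$ in $U_q(\g_-)$, every summand vanishes. Hence $\hat f_{ij} = 0$ already in $(\hat F, >)$, matching the entry in $(\hat F, \succ)$.

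Next, for $i \succ j$, the $\succ$-routes form a subset of the $>$-routes. A $>$-route that is not a $\succ$-route must contain at least one step $(m_l, m_{l+1})$ with $m_l > m_{l+1}$ but $m_l \not\succ m_{l+1}$. For such a step the length-one chain from $m_l$ to $m_{l+1}$ is itself a candidate for witnessing $\succ$, and its failure forces $f_{m_l,m_{l+1}} = 0$. Therefore $f_{(\vec m, j)}$ acquires a zero factor and the extra term drops out of the sum.

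The key observation driving both steps is an asymmetry: while $m \succ m'$ may be witnessed only by an indirect chain (so $m \succ m'$ need not imply $f_{m,m'} \neq 0$), the contrapositive $m \not\succ m'$ does force $f_{m,m'} = 0$, since the singleton chain would otherwise suffice. Once this is noted, no real obstacle remains; the lemma reduces to bookkeeping in the product structure of $f_{(\vec m, j)}$, with the Cartan factors $A^j_{\vec m}$ playing no role because the vanishing occurs already in the $U_q(\g_-)$-part.
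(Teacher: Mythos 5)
Your argument is correct and follows the same route as the paper: the paper's proof likewise rests on the single observation that $i>j$ with $i\not\succ j$ forces $f_{ij}=0$ (your ``singleton chain'' remark), so that every $>$-route which is not a $\succ$-route contributes a vanishing product $f_{\vec m}$, while the weight-dependent coefficients $A^j_{\vec m}$ are unaffected. Your write-up merely spells out the two cases ($i\not\succ j$ versus $i\succ j$ with extraneous routes) that the paper treats in one line.
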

\begin{proof}
Observe that  $f_{ij}=\Tr(e_{ji}\Fc_1)\Fc_2=0$ for $i>j$ but $i\not \succ j$, where $\Fc_1\tp \Fc_2=\Fc$ is the Sweedler notation.
Therefore, the product $f_{\vec m}$ turns zero if $\vec m$ is a route with respect to $>$ but not $\succ$.
Finally, the coefficients $A^j_i$ depend only on weights, hence the proof.
\end{proof}
Next we show that the matrix associated with a factor-module can be obtained from the the covering module.
\begin{propn}
\label{Nat}
Suppose that $V$ and $V'$ are two weight $U_q(\b_+)$-modules and $\phi\colon V\to V'$
is a surjective $U_q(\g_+)$-homomorphism. Choose a basis $\{v_i\}_{i\in I} \in V$ such that $\phi(v_i)=0$ for $i\in \bar I'=I\backslash I'$ and
$\phi(v_i)=w'_i$ for $i\in I'$ forms a basis in $V'$. Then $\hat f_{ij}=\hat f_{ij}'$ for $i,j\in I'$.
\end{propn}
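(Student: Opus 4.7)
My plan is to deduce this directly from the naturality property in Corollary \ref{natural}, which is itself a consequence of the existence of the universal matrix $\hat\Fc$. That corollary supplies, for any $U_q(\g_+)$-homomorphism $\phi \colon V \to V'$ of weight $U_q(\b_+)$-modules, the intertwining identity $(\phi \tp \id)\hat F = \hat F'(\phi \tp \id)$ as linear maps $V \tp \hat U_q(\b_-) \to V' \tp \hat U_q(\b_-)$. This is essentially all I need.

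Apply both sides to the element $v_j \tp 1$ for a fixed $j \in I'$. Using the expansion $\hat F(v_j \tp 1) = \sum_{i \in I} v_i \tp \hat f_{ij}$, and the assumption $\phi(v_i)=0$ for $i\in \bar I'$ together with $\phi(v_i)=w'_i$ for $i \in I'$, the left-hand side collapses to $\sum_{i \in I'} w'_i \tp \hat f_{ij}$. The right-hand side equals $\hat F'(w'_j \tp 1) = \sum_{i \in I'} w'_i \tp \hat f'_{ij}$. Since $\{w'_i\}_{i \in I'}$ is a basis of $V'$, comparing coefficients gives $\hat f_{ij} = \hat f'_{ij}$ for every $i,j \in I'$, which is exactly the claim.

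The reason the proof is so short is that the universal construction of $\hat\Fc$ already encodes the nontrivial cancellation: from the combinatorial formula (\ref{dynamization}), the sum defining $\hat f_{ij}$ in $V$ involves routes that may pass through kernel nodes in $\bar I'$, and there is no transparent term-by-term reason why those contributions should match $\hat f'_{ij}$. Naturality of the specialization $\hat\Fc \mapsto \hat F$ under module homomorphisms is precisely what performs this cancellation. The only minor preliminary to check is that a basis of the asserted form exists, but this is automatic: $\ker \phi$ is a weight-graded subspace, and splitting it off weight-space by weight-space produces a basis of $\ker \phi$ indexed by $\bar I'$ that can be completed by preimages of any basis of $V'$, indexed by $I'$. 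The only real obstacle, if any, is psychological: one must resist trying to match routes directly and instead trust the universal tensor.
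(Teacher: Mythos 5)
Your proof is correct, and it is not circular: Corollary \ref{natural} precedes Proposition \ref{Nat} and rests only on Proposition \ref{univ_project}, so you are entitled to use it. But it is a genuinely different route from the paper's. The paper argues combinatorially on the Hasse diagrams: taking $\succ$ as the ordering, it observes that $\ker\phi$ is a $U_q(\g_+)$-submodule, so $f_{ij}\ne 0$ with $j\in\bar I'$ forces $i\in\bar I'$; hence no route between two nodes of $I'$ can pass through $\bar I'$, the subdiagram $\Hc'$ is full in $\Hc$, and the sums (\ref{dynamization}) defining $\hat f_{ij}$ and $\hat f'_{ij}$ agree \emph{term by term}. Your argument replaces all of this by one application of the naturality identity $(\phi\tp 1)\hat F=\hat F'(\phi\tp 1)$ to $v_j\tp 1$ and a comparison of coefficients in the basis $\{w'_i\}_{i\in I'}$ (legitimate, since each weight component of the sum is finite). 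What you buy is brevity and the correct conceptual placement of the result as a naturality statement; what the paper's proof buys is the stronger structural fact that every contributing route stays inside $I'$ --- precisely the "term-by-term reason" you say is not transparent, and which the paper does in fact supply and implicitly relies on later (e.g.\ when identifying $V$ with a complement of $\bar V$ in $M_\la^*$ in Proposition \ref{suffic_irred}). Your closing remark about the existence of an adapted weight basis is correct and worth the sentence you give it.
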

\begin{proof}
Take $\succ$ for the partial ordering in $I$ and $I'$.
We have an embedding $\Hc'\subset \Hc$ of the Hasse diagrams. Moreover, $\Hc'$ is a full subdiagram, since
$f_{ij}\not =0$ for all $i,j\in I'$ simultaneously in $\Hc$ and $\Hc'$. Furthermore, all $\Hc$-routes between $i,j\in \Hc'$ are routes in $\Hc'$.
Indeed, let $\bar V'\subset V$ be the kernel of $\phi$ spanned by $w_i$, $i\in \bar I'$.
Let $\vec m$ be a route from $j$ to $i$ in $\Hc$. If  a simple pair $(l,r)\in \vec m$ is such that $l\in \bar I'$ and $r\in I'$, then all nodes $m \succcurlyeq l$ including $i$ belong to $\bar I'$, which is  a contradiction. Therefore $\vec m$ is in $\Hc'$.
\end{proof}

\section{Singular vectors and inverse Shapovalov form}
\label{Sec_Shap_form}
In this section we show that the tensor $\hat \Fc\in U_q(\g_+)\tp \hat U_q(\b_-)$ is a lift of the inverse Shapovalov form.
Such an indication follows from Proposition \ref{hat F}. To complete the task, we need to
prove a similar identity for the negative Chevalley generators. Instead, we
employ  the fact that the inverse Shapovalov form is producing singular vectors in
the tensor product of $U_q(\g)$-modules serving as an intertwiner.

For all weights $\la\in \h^*$ consider one-dimensional representations $\C_{\la}$ of $U_q(\h)$ defined by $q^{h_\al}\mapsto q^{(\la,\al)}$.
Extend them  to representations of $U_q(\b_\pm)$ by nil on $U_q(\g_\pm)$ and
define $U_q(\g)$-modules
$$
M_\la=\hat U_q(\g)\tp_{\hat U_q(\b_+)}\C_{\la}, \quad M_\la^*=\hat U_q(\g)\tp_{\hat U_q(\b_-)}\C_{-\la}.
$$
Let $1_{\la}\in M_\la$ and $1^*_\la\in M_\la^*$ denote their canonical generators.
The BPW property of $U_q(\g)$ implies that $M_\la^*$ is free as a $U_q(\g_+)$-module.
A weight basis $\{e_i\}_{i\in I}\subset U_q(\g_+)$
is parameterized by $I\simeq \Z_+^{\dim \g_+}$, with
 $e_0=1$.  Then $\{e_i1^*_{\la}\}_{i\in I}$ is a basis in $M_\la^*$.

Recall that a vector $u$ in a $U_q(\g)$-module is called singular if $e_\al u=\eps(e_\al)u= 0 $ for all $\al\in \Pi^+$.
It is known that, in the case of finite dimensional $V$ and generic $\la$ there is a singular vector $u_j\in V\tp M_\la$ of weight $\la+ \ve_j$  for all $j\in I$. For the reader's convenience we present the construction  via the inverse of a special
invariant pairing $M_\la \tp M^*_\la\to \C$. We call it Shapovalov form because it is equivalent to the usual contravariant form on
$M_\la$, \cite{Jan}.

The form in question is determined by the requirements
$\langle 1_\la, 1^*_\la\rangle =1$  and $\langle x 1_\la, 1^*_\la\rangle = \langle 1_\la,\gm(x)1_\la^*\rangle$ for all $x\in U_q(\g)$.
It is non-degenerate if and only if $M_\la$ and $M^*_\la$ are irreducible. The form is known to be non-degenerate for generic $\la$. Let $\S^\la\in M_\la^*\tp M_\la$ be its inverse and write it as
$\S^\la=\sum_{i\in I} e_i 1^*_\la\tp f_i1_\la$,
where $f_i=f_i(\la)\in  U_q(\g_-)$. Then $\{f_i1_\la\}_{i\in I}$ forms the dual basis to $\{e_i1_\la^*\}_{i\in I}$ for
those $\la$ where the inverse is defined, and $\S^\la$ is independent of the choice of $\{e_i\}_{i\in I}$.
It readily follows from the invariance of $\S^\la$ that $\sum_{i\in I}e_i v_j\tp f_i1_\la$ is a singular vector in $V\tp M_\la$.
\begin{propn}
Suppose that the modules $M_\la$, $M_{\la+\ve}$, $\forall \ve \in \La(V)$, are irreducible.
Then $V\tp M_\la \simeq \oplus_{j\in I} M_{\la+\ve_j}$.
\end{propn}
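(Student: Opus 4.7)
The display of singular vectors immediately preceding the statement produces, for each $j\in I$, a weight-$(\la+\ve_j)$ singular vector $u_j = \sum_{i\in I} e_i v_j \tp f_i 1_\la$ in $V\tp M_\la$. This $u_j$ is nonzero because the term indexed by $i=0$ (with $e_0=1$ and, by duality of the Shapovalov bases, $f_0=1$) contributes $v_j\tp 1_\la$, while every other summand has $f_i 1_\la$ of weight strictly less than $\la$ and so lies outside $V\tp \C 1_\la$. The irreducibility assumption on $M_{\la+\ve_j}$ together with the universal property of Verma modules therefore gives a $U_q(\g)$-injection $\iota_j:M_{\la+\ve_j}\hookrightarrow V\tp M_\la$ sending $1_{\la+\ve_j}\mapsto u_j$. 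Assembling these, set $\Psi=\bigoplus_{j\in I} \iota_j:\bigoplus_{j\in I} M_{\la+\ve_j}\to V\tp M_\la$.

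To show $\Psi$ is an isomorphism I first compare characters. Since each Verma module has character $\chi(M_\mu)=e^\mu\prod_{\al\in \Rm^+}(1-e^{-\al})^{-1}$ and $\chi(V)=\sum_{j\in I}e^{\ve_j}$, one computes
$$
\chi(V\tp M_\la)=\chi(V)\chi(M_\la)=\sum_{j\in I}e^{\la+\ve_j}\prod_{\al\in \Rm^+}(1-e^{-\al})^{-1}=\sum_{j\in I}\chi(M_{\la+\ve_j}).
$$
All weight spaces on both sides are finite dimensional, so once $\Psi$ is shown to be injective it is automatically surjective.

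The main obstacle is injectivity of $\Psi$. The kernel is a $U_q(\g)$-submodule of a direct sum of Verma modules with weights bounded above, hence, if nonzero, contains a weight-maximal element which is necessarily singular. Since each $M_{\la+\ve_j}$ is irreducible, its only singular vectors are scalar multiples of the highest weight vector $1_{\la+\ve_j}$; therefore every singular vector of $\bigoplus_j M_{\la+\ve_j}$ has the form $w=\sum_j c_j 1_{\la+\ve_j}$ with the nonzero coefficients supported on indices of a single common weight. Applying $\Psi$, one obtains $\Psi(w)=\sum_j c_j u_j$, and the projection $V\tp M_\la \to V\tp \C 1_\la\simeq V$ along $M_\la=\C 1_\la\op U_q(\g_-)\g_- 1_\la$ sends $u_j\mapsto v_j$ by the leading-term analysis of the first paragraph. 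Linear independence of $\{v_j\}_{j\in I}$ forces all $c_j=0$, so $\ker\Psi=0$ and the character identity completes the proof.
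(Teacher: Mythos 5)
Your proof is correct, but it establishes injectivity by a genuinely different mechanism than the paper. The paper builds, alongside the map $1_{\la+\ve_j}\mapsto\sum_i e_iv_j\tp f_i1_\la$, an explicit retraction $V\tp M_\la\to M_{\la+\ve_j}$, $v\tp 1_\la\mapsto \sum_i\langle v,e_iv^j\rangle f_i1_{\la+\ve_j}$, using the dual basis $\{v^j\}$ of $V^*$, and then verifies by a coproduct computation (only the $i=0$, $l=0$ terms survive for weight reasons) that the composite $\oplus_j M_{\la+\ve_j}\to V\tp M_\la\to\oplus_k M_{\la+\ve_k}$ is the identity; injectivity is immediate. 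You instead argue abstractly: a nonzero kernel would contain a maximal-weight, hence singular, vector; irreducibility forces its components to be multiples of the $1_{\la+\ve_j}$; and the images $u_j$ are linearly independent because their leading terms under the projection $V\tp M_\la\to V\tp\C 1_\la$ are the $v_j$ (using $f_0=1$, which is correct since the zero-weight block of the Shapovalov form is $\langle 1_\la,1^*_\la\rangle=1$). Both proofs then finish surjectivity identically, by comparing dimensions of weight subspaces, and both implicitly require $V$ finite dimensional (or at least that the weight spaces of $V\tp M_\la$ be finite dimensional) for that count. What the paper's route buys is the explicit pair of mutually inverse intertwiners, which is in the spirit of the rest of the article (the retraction is essentially an extremal-projector-type formula); what yours buys is the avoidance of the Sweedler/coproduct computation, replaced by the standard highest-weight-category argument. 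One small caveat: your phrase \emph{``weights bounded above''} deserves a word of justification --- since $\La(V)$ is finite, every nonempty set of weights of $\oplus_j M_{\la+\ve_j}$ does contain a maximal element, so the argument goes through.
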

\begin{proof}
Let $V^*$ be the right dual to $V$ and let $\{v^j\}_{j\in I}$ be the dual basis to
$\{v_j\}_{j\in I}$, $\langle v_j,v^i\rangle=\dt_{ij}$. Observe that $v^{j}$ has weight $-\ve_j$.
Since the Verma modules are irreducible, their Shapovalov forms are non-degenerate.
We have two module homomorphisms
$$
M_{\la+\ve_j}\to V\tp M_\la , \quad V\tp M_\la \to M_{\la+\ve_j},
$$
$$
1_{\la+\ve_j}\mapsto
\sum_{i\in I} e_i v_j\tp f_i 1_\la,
\quad v\tp 1_\la \to \sum_{i\in I}\langle v,  e_iv^j\rangle f_i1_{\la +\ve_j}.
$$
They amount to the homomorphisms
$$
\oplus_{j\in I} M_{\la+\ve_j} \to V\tp M_\la \to \oplus_{k\in I} M_{\la+\ve_k}.
$$
By irreducibility, the through map $ M_{\la+\ve_j}  \to M_{\la+\ve_k}$ is zero if $\ve_j\not =\ve_k$.
Otherwise it goes as
$$
1_{\la +\ve_j}\mapsto \sum_{i\in I} e_iv_j\tp f_i1_\la \to
\sum_{i\in I} \sum_{l\in I}\langle e_iv_j, f_i^{(1)} e_lv^k\rangle f_i^{(2)} f_l1_{\la +\ve_k}=\langle v_j, v^k \rangle 1_{\la +\ve_k}= \dt_{jk}1_{\la +\ve_k},
$$
where $f_i^{(1)}\tp f_i^{(2)}$ is the Sweedler notation for $\Delta(f_i)$.
Indeed, since the weights of $e_if^{(1)}_i$ and $e_l$ are non-negative,
only the terms with $i=0$, $l=0$ contribute to the sum.
Therefore the composition map is identical, and  the left map is injective.
Comparing the dimensions of the weight subspaces one can prove that it is an surjective as well.
\end{proof}

It follows, for generic $\la$,  that the map $v\mapsto \sum_{i\in I} e_i v\tp f_i1_\la$ is a $U_q(\h)$-linear bijection of $V$ onto the set of singular vectors in $V\tp M_\la$.
At the same time, the
tensors $\hat F(v_j\tp 1_\la)=\sum_{j\in I} v_i\tp \hat f_{ij}1_\la$
are singular vectors of weight $\la+\ve_j$, by  Proposition \ref{hat F}.
They are well defined provided
the weight $\la$ is not a solution of the equation
$
q^{2(\la+\rho,\al)-||\al||^2}=1
$
for all $\al =\ve_i-\ve_j$ such that $i\succ j$. Let $\hat\h^*_{reg}(V)$ denote the set of such weights and $\h^*_{reg}(V)\supset \hat\h^*_{reg}(V)$
the set of $\la$ admitting analytical continuation of $\hat F$.
It readily follows from Proposition \ref{univ_project} that, under the isomorphism $U_q(\g_+)\simeq M^*_\mu$, the matrix
$\hat F\in \End(M_\mu^*)\tp \hat  U_q(\g_-)$ is independent of $\mu$.
Moreover, $\hat F (v\tp 1_\la) = \sum_{i\in I}e_i v \tp \hat f_{i0}1_\la=\hat \Fc(v \tp 1_\la) \in M_\mu^*\tp M_\la$ for all $v\in M^*_\mu$.
\begin{thm}
Suppose that $q^{2(\la+\rho,\al)-m||\al||^2}\not =1$ for all $\al \in \Rm^+$ and $m\in\N$. Then the tensor $ \hat \Fc (1^*_\la\tp 1_\la)$ is
the inverse of the Shapovalov form $\S^\la$.
\end{thm}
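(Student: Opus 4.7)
The plan is to identify $\hat\Fc(1^*_\la\tp 1_\la)$ with $\S^\la$ by showing that both are weight-zero singular vectors in $M^*_\la\tp M_\la$ with the same $1^*_\la\tp M_\la$ component, and that such data determine the singular vector uniquely. Under the hypothesis $q^{2(\la+\rho,\al)-m||\al||^2}\neq 1$ for all $\al\in\Rm^+$ and $m\in\N$, the Verma modules $M_\la$ and $M^*_\la$ are irreducible, so $\S^\la$ is defined, and all denominators of the form $[\eta_{ij}]_q$ arising when $\hat\Fc$ is applied to $1^*_\la\tp 1_\la$ are invertible, making that image a well-defined element of (the weight-graded completion of) $M^*_\la\tp M_\la$.

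First I would verify that both tensors are singular vectors of weight zero. For $\S^\la$ this is the invariance property recalled just before the statement. For $\hat\Fc(1^*_\la\tp 1_\la)$, I specialise Proposition~\ref{univ_project} to $V=M^*_\la$, using the PBW basis $\{e_i\}_{i\in I}$ of $U_q(\g_+)$ with $e_0=1$ as the weight basis $\{e_i 1^*_\la\}$ of $M^*_\la$, so that $v_0=1^*_\la$ has weight $\ve_0=-\la$. This identifies $\hat\Fc(1^*_\la\tp 1_\la)$ with $\hat F(1^*_\la\tp 1_\la)$, and Proposition~\ref{hat F} gives $\Delta(e_\al)\hat F(1^*_\la\tp 1_\la)=0$ for each simple $\al$, because the error term in $\hat U_q(\g)\g_+$ annihilates $1_\la$. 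The total weight is $\la+\ve_0=0$.

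Next I would match the ``leading'' components. Since $\hat\Fc=1\tp 1+\hat\Fc'$ with $\hat\Fc'\in U_q(\g_+)\g_+\tp\hat U_q(\b_-)$, the projection of $\hat\Fc(1^*_\la\tp 1_\la)$ onto $\C\cdot 1^*_\la\tp M_\la$ is exactly $1^*_\la\tp 1_\la$, with all other contributions landing in $(e_i 1^*_\la)\tp M_\la$, $\wt(e_i)>0$. The same projection arises from the $i=0$ term of $\S^\la=\sum_i e_i 1^*_\la\tp f_i 1_\la$, where $e_0=f_0=1$.

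Finally I would show uniqueness. For any weight-zero singular vector $u=\sum_i e_i 1^*_\la\tp g_i$ with $g_i\in M_\la[\la-\wt(e_i)]$, I would expand $e_\al e_i=\sum_k c^k_{\al,i}\, e_k$ in PBW and project $\Delta(e_\al)u=0$ onto the $e_k 1^*_\la$ component to obtain
\be
e_\al g_k = -\sum_{i:\,\wt(e_i)=\wt(e_k)-\al} c^k_{\al,i}\, q^{(\al,\la-\wt(e_i))}\, g_i.\nn
\ee
Given two such singular vectors $u,u'$ with common $g_0=g_0'=1_\la$, the difference $w=u-u'$ has $w_0=0$. Inducting on $\wt(e_k)$: if $w_i=0$ whenever $\wt(e_i)<\wt(e_k)$, the displayed relation forces $e_\al w_k=0$ for every simple $\al$, so $w_k$ is a singular vector of $M_\la$ at weight $\la-\wt(e_k)\neq\la$. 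Irreducibility of $M_\la$ then yields $w_k=0$, so $w=0$ and $\hat\Fc(1^*_\la\tp 1_\la)=\S^\la$. The main obstacle is this uniqueness step, but since each weight space of $M^*_\la$ is finite-dimensional, the induction closes up without convergence issues.
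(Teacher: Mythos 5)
Your uniqueness argument is a genuinely different route from the paper's. The paper realizes each finite-dimensional irreducible $V$ with lowest weight $-\mu$ as a quotient of $M^*_\mu$, uses the one-dimensionality of the lowest weight space to pin down the singular vector of weight $\la-\mu$ in $V\tp M_\la$ up to a scalar, and then lets $V$ range over all finite-dimensional modules to separate the coefficients in $U_q(\g_+)$. Your triangular recursion inside $M^*_\la\tp M_\la$ itself — projecting $\Delta(e_\al)u=0$ onto the PBW components and killing the difference $w$ by induction on $\wt(e_k)$ using irreducibility of $M_\la$ — is correct where everything is defined, and it is arguably cleaner since it avoids the detour through finite-dimensional quotients.

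There is, however, a genuine gap at the first step: the claim that the hypothesis $q^{2(\la+\rho,\al)-m||\al||^2}\neq 1$ ($\al\in\Rm^+$, $m\in\N$) makes all denominators $[\eta_{ij}]_q$ invertible is false. For $V=M^*_\la$ the weight differences $\ve_i-\ve_j$ with $i\succ j$ run over all of $\Gm^+$, not just over $\N\Rm^+$, so the denominators are $[(\la+\rho,\mu)-\frac{1}{2}||\mu||^2]_q$ for arbitrary $\mu\in\Gm^+$. For instance, in $\g=\s\l(3)$ take $\mu=2\al_1+\al_2$ (not a root, $||\mu||^2=6$) and $\la$ with $(\la+\rho,\al_1)=5/4$, $(\la+\rho,\al_2)=1/2$: the hypothesis holds since $5/4$, $1/2$, $7/4\notin\N$, yet $(\la+\rho,\mu)-\tfrac{1}{2}||\mu||^2=3-3=0$, so the term $f_{i0}\varphi(-\eta_{i0})$ with $\wt(e_i)=\mu$ has a vanishing denominator. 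Your argument therefore proves the theorem only on the smaller set $\hat\h^*_{reg}(M^*_\la)$, and at the excluded $\la$ the tensor $\hat\Fc(1^*_\la\tp 1_\la)$ is not even obviously defined. To reach the full hypothesis set — which by de Concini--Kac is exactly the irreducibility locus of $M_\la$ — you need the continuation step the paper supplies: each weight component of $\hat f_{i0}1_\la$ and of $f_i1_\la$ is rational in the $q^{(\la,\al)}$, your identity holds on the Zariski-dense set $\hat\h^*_{reg}(M^*_\la)$, and the right-hand side $\S^\la$ remains regular wherever $M_\la$ is irreducible, so the poles of the individual terms must cancel there. With that rationality argument appended, your proof closes.
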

\begin{proof}
For fixed $q$, the vectors $f_i$ are rational in $q^{(\la,\al)}$, $\al \in \Pi^+$.
Since $\S^\la$ is independent of basis $\{e_i\}_{i\in I}$, it is sufficient to prove the statement for $\la \in \hat\h^*_{reg}(M^*_\la)$ and do analytical continuation to  $ \h^*_{reg}(M^*_\la)$, where the module $M_\la$ is still irreducible.
According to \cite{DCK}, it is the set of $\la$ such that $q^{2(\la+\rho,\al)-m||\al||^2}\not =1$ for all $m\in \N$ and all $\al \in \Rm^+$.

Given a finite dimensional irreducible $U_q(\g)$-module $V$ with the lowest vector $v_0$ of weight $-\mu$,
there is a unique singular vector $\sum_{i\in I}e_i v_0\tp f_i1_\la$ of weight $-\mu+\la$ in $V\tp M_\la$, up to a scalar multiplier.
 On the other hand, it is equal to $\hat \Fc (v_0\tp 1_\la)$,
therefore they coincide up to a scalar. Since this holds for all finite dimensional modules $V$,
we have $\sum_{i\in I}e_i \tp f_i1_\la \sim \sum_{i\in I}  e_{i}\tp \hat f_{i0}1_\la$, which is obviously an equality,
by comparison at $i=0$.
\end{proof}

The Shapovalov form  vanishes on proper submodules in $M^*_\la$ and $M_\la$ and can be projected to their quotients.
In general, a module of highest (lowest)  weight is irreducible if and only if the Shapovalov form has zero kernel on it.
Let $-\la$ be the lowest weight of $V$. Then $V$  is a quotient of a negative Verma module, $M_{\la}^*$, by a
submodule $\bar V$.
 The annihilator $V^\perp$ of $V$ in $M_\la$ is a submodule.
 Let $W$ denote the quotient $M_\la$ by $V^\perp$ and $w_0 \in W$, $v_0\in V$ the images of $1_\la$, $1_\la^*$, respectively.
The Shapovalov form factors through invariant pairing $W\tp V\to \C$.
\begin{propn}
Suppose that $\la\in \h_{reg}^*(V)$. Then $\hat \Fc(v_0\tp w_0)\in V\tp W$ is the Shapovalov inverse,
and the modules $V$, $W$ are irreducible.
\label{suffic_irred}
\end{propn}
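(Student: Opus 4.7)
The plan is to descend the identity $\hat\Fc(1_\la^*\tp 1_\la)=\S^\la$ from the preceding theorem down to $V\tp W$ via the canonical projections $p_V\colon M_\la^*\twoheadrightarrow V$ and $p_W\colon M_\la\twoheadrightarrow W$, and then read off both the inverse-form statement and the irreducibility claim. First, Proposition \ref{hat F} applied with $V$ as the weight $U_q(\b_+)$-module in the left tensor factor shows that $\hat\Fc(v_0\tp w_0)\in V\tp W$ is a singular vector of weight zero, since $w_0$ is the image of $1_\la$ and is annihilated by $\g_+$; the hypothesis $\la\in\h^*_{reg}(V)$ ensures the element is well defined.

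Second, one identifies $\hat\Fc(v_0\tp w_0)=(p_V\tp p_W)\bigl(\hat\Fc(1_\la^*\tp 1_\la)\bigr)$ by combining Proposition \ref{Nat} on the left factor, which equates the entries $\hat f_{ij}$ computed in $M_\la^*$ and in $V$ on the common index set (the remaining ones vanishing under $p_V$), with the $\hat U_q(\b_-)$-equivariance of $p_W$ on the right factor. For $\la$ in the Kac--de Concini region of $M_\la$-irreducibility this gives $(p_V\tp p_W)(\S^\la)=\hat\Fc(v_0\tp w_0)$, and rationality of both sides in $q^{(\la,\cdot)}$ extends the identification to all $\la\in\h^*_{reg}(V)$. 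The inverse-form property then follows from projecting the Verma resolution of identity $x=\sum_i\langle x,e_i1_\la^*\rangle\,f_i1_\la$ under $p_W$ and invoking the factoring $\Phi=\bar\Phi\circ(p_W\tp p_V)$, which yields $p_W(x)=\sum_i\bar\Phi(p_W(x),p_V(e_i1_\la^*))\,p_W(f_i1_\la)$; surjectivity of $p_W$ promotes this to the statement that $\hat\Fc(v_0\tp w_0)$ inverts $\bar\Phi$ on $W\tp V$, and the symmetric identity from the $M_\la^*$ side handles non-degeneracy from the other direction.

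For the irreducibility, the existence of this inverse forces $\bar\Phi$ to be non-degenerate in both slots. If $V'\subsetneq V$ were a proper submodule, invariance of $\bar\Phi$ makes $(V')^\perp\subset W$ a submodule, necessarily proper by weight-wise dimension counting; hence $w_0\notin(V')^\perp$, so some vector $v'\in V'$ of weight $-\la$ pairs non-trivially with $w_0$. But $V_{-\la}=\C v_0$, so $v_0\in V'$, forcing $V'=V$, a contradiction. The analogous argument shows $W$ irreducible. The principal technical obstacle lies in the second step: justifying the analytic continuation of the identity $\hat\Fc(1_\la^*\tp 1_\la)=\S^\la$ to arbitrary $\la\in\h^*_{reg}(V)$ in a manner consistent with the $\la$-dependence of the submodules $\bar V$ and $V^\perp$, especially when $M_\la$ itself becomes reducible.
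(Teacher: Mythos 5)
Your overall plan---descend from the Verma-module statement through the projections $p_V\colon M_\la^*\to V$ and $p_W\colon M_\la\to W$ and then read off non-degeneracy---is close in spirit to the paper's argument, and your closing irreducibility step (a nonzero submodule $V'$ must pair nontrivially with the generator $w_0$, hence meet the one-dimensional weight space $V_{-\la}$, hence contain $v_0$ and equal $V$) is sound once non-degeneracy is in hand. The genuine gap is in your second step, and it is exactly the one you flag and then leave open: the identity $\hat\Fc(1_\la^*\tp 1_\la)=\S^\la$ that you want to push down through $p_V\tp p_W$ presupposes that the Shapovalov form on $M_\la$ is invertible, i.e.\ that $M_\la$ is irreducible. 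The hypothesis of the proposition is only $\la\in\h^*_{reg}(V)$, and the cases the proposition is actually for (e.g.\ $V$ finite dimensional, so $-\la$ is an antidominant lowest weight and $M_\la$ is reducible) are precisely those where $\S^\la$ does not exist. Your proposed repair by ``rationality in $q^{(\la,\cdot)}$'' cannot close this, because there is no family to continue in: $V$, $W$, $\bar V$, $V^\perp$ and the maps $p_V$, $p_W$ are rigidly attached to the single weight $\la$ (it is the lowest weight of the given module $V$), so $(p_V\tp p_W)(\S^\la)=\hat\Fc(v_0\tp w_0)$ is not one member of a rational family of identities in $\la$.

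The paper's proof avoids $\S^\la$ altogether. It applies the quotient map $M_\la^*\to V$ to the \emph{left} leg of the universal tensor first, obtaining $\hat F\in\End(V)\tp\hat U_q(\b_-)$ with the Cartan part still unspecialized; Proposition \ref{Nat} identifies the surviving entries $\hat f_{ij}$ with those of the covering Verma module at this universal level, where no irreducibility of $M_\la$ is needed. Only then is the right leg specialized at $\la$, which is legitimate because $\la\in\h^*_{reg}(V)$ controls exactly the denominators $[\eta_{ij}]_q$ with $i,j$ indexing $V$. Non-degeneracy of the resulting pairing of $V$ with a subspace of $M_\la$, hence of the induced form $W\tp V\to\C$, is then read off directly, and both the inverse-form statement and irreducibility follow. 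If you wish to keep a continuation argument, it must be applied to quantities that genuinely are rational in $q^{(\la,\al)}$ for fixed indices, such as the scalars $\langle e_i1_\la^*,\hat f_{j0}1_\la\rangle$, rather than to an identity between objects whose very definition depends on $\la$.
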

\begin{proof}
Irreducibility follows from the first statement, so let us prove it.
Apply the homomorphism $M_\la^*\to V$ to the left tensor factor of $\hat \Fc$
and get $\hat F\in \End(V)\tp \hat U_q(\b_-)$. By the hypothesis, $\hat F(v_0\tp 1)$ is regular when its right factor specialized
at $\la$,
so  $\hat F(v_0\tp 1_\la)\in V\tp M_\la$ and $\hat F(v_0\tp w_0)\in V\tp W$ are well defined.
Identify $V$ with  a subspace in $M_\la^*$ complementary to $\bar V$.
By Proposition \ref{Nat}, $\hat F$ yields a non-degenerate pairing of $V$ with a subspace in $M_\la$. Therefore
$V$ is irreducible along with $W$. Projection to $W$ sends
$\hat F(v_0\tp 1_\la)$ to  $\hat F(v_0\tp w_0)$, the Shapovalov inverse of
the  non-degenerate form $W\tp V\to \C$.
\end{proof}
\noindent
Proposition \ref{suffic_irred} gives a sufficient condition on the highest (lowest) weight for a module to be irreducible.
\section{ABRR equation revisited}
Formula  (\ref{geom F}) readily implies that $\hat \Fc$ satisfies the linear identity
\be
\hat \Ru\hat \Fc = q^{2\tp d}(\hat \Fc)q^{-2\tp d},\quad \hat \Ru = q^{-h\dot \tp h}\Ru=1\tp 1+(q-q^{-1})\Fc.
\label{ABRR}
\ee
The tensor $\hat \Fc$ lies in the kernel of the operator $X\mapsto q^{-2D}(\hat \Ru X)-X$. The latter is continuous
in the topology on $U_q(\g_+)\tp \hat U_q(\b_-)$ relative to the decreasing  filtration induced by the grading.
Its diagonal part $q^{-2D}-\id$ is invertible when restricted to the zero weight subalgebra of positive degree. Therefore, $\hat \Fc $ is a unique solution of (\ref{ABRR}) with the zero degree component $1\tp 1$.

There is a unique tensor $\hat \S\in U_q(\g_+)\tp U_q(\g_-)\tp \hat U_q(\h)$
such that $(\hat \S_11_{\la}^*)\tp (\hat \S_2 1_\la) \>\la(\hat \S_3)=\S^\la$. The linear isomorphism $\iota'\colon U_q(\g_+)\tp \hat U_q(\b_-)\mapsto U_q(\g_+)\tp U_q(\g_-)\tp \hat U_q(\h)$ (an extension of tensor product required)
takes $\hat \Fc$ to $\hat \S$. Denote by $\Jc$ the image of $\hat \S$ under the linear isomorphism
$\iota''\colon a\tp b \tp h\mapsto a\tp bh^{(1)} \tp h^{(2)}\in U_q(\g_+)\tp \hat U_q(\b_-)\tp \hat U_q(\h)$. The composition $\iota=\iota''\circ \iota'
\colon U_q(\g_+)\tp \hat U_q(\b_-)\mapsto U_q(\g_+)\tp \hat U_q(\b_-)\tp \hat U_q(\h)$
is an algebra homomorphism.
Therefore $\Jc=\iota (\hat \Fc)$ satisfies the equation
\be
\hat \Ru \Jc = q^{2(1\tp d\tp 1 +1\tp h\dot \tp h)}  \Jc q^{-2(1\tp d\tp 1 +1\tp h\dot \tp h)},
\label{ABRR1}
\ee
since $\iota (1\tp d)= 1\tp d\tp 1 +1\tp h\dot \tp h+1\tp 1\tp d$ (the last summand cancels from (\ref{ABRR1})).
The tensors $\Jc$ and $\hat \S$ are expressed through each other by
$$
\Jc=\hat \S_1\tp \hat \S_2\hat \S_3^{(1)}\tp \hat \S_3^{(2)}, \quad
\hat \S=\Jc_1\tp \Jc_2\gm(\Jc_3^{(1)})\tp \Jc_3^{(2)},
$$
where the Sweedler notation $\Delta(x)=x^{(1)}\tp x^{(2)}$ is used for the coproduct.
We also have $\hat \Fc=\Jc_1\tp \Jc_2 \eps(\Jc_3)$, and the homomorphism $\id\tp \id\tp \eps$ is  inverse to $\iota$.
The tensor $\Jc$ is known to be a dynamical twist, \cite{ES}.
The algebra automorphism of $U_q(\h)$ determined by $q^{h_\al}\mapsto q^{h_\al-(\rho,\al)}$, $\al\in \Pi^+$,
 preserves the dynamical twist identity and takes (\ref{ABRR1}) to
the ABRR equation of \cite{ABRR}. The image of $\Jc$ under this transformation coincides with the twist of \cite{ABRR} due
to its uniqueness.

Remark that the dynamical twist is obtained  in  \cite{ABRR} in the form of infinite product
via Picard iterations.
Even in a finite dimensional representation, it involves infinite number of factors,
while the series (\ref{geom F}) turns to a finite sum. The highest degree of $F^{(k)}$  in the truncate series for
$\hat F \in \End(V)\tp \hat U_q(\b_-)$
is equal to the length of longest path in $\La(V)$.

\vspace{20pt}
{\bf Acknowledgements}.
This research is supported in part by the RFBR grant 15-01-03148. 
We are grateful to the Max-Plank Institute for Mathematics in Bonn for hospitality and
excellent research atmosphere.

\end{document}